\newtheorem{theorem}{Theorem}[section]   
\newtheorem{corollary}[theorem]{Corollary}  
\newtheorem{lemma}[theorem]{Lemma}
\newtheorem{example}[theorem]{Example}
\newtheorem{remark}[theorem]{Remark}
\numberwithin{equation}{section}         
\def\en{\enspace}
\def\ab#1{|#1|}
\def\c #1{{\cal #1}}
\def\C{\mathbf{C}}
\def\I{\mathbf{I}}
\def\a{\alpha}
\def\b{\beta}
\def\g{\gamma}
\def\G{\varGamma}
\def\ph{\varphi}
\def\bo{\Box}
\def\di{\Diamond}
\def\dit{{\langle t \rangle}}
\def\imp{\Rightarrow}
\def\liff{\Leftrightarrow}
\def\join{{\textstyle\bigvee}}
\def\meet{{\textstyle\bigwedge}}
\def\sub{\subseteq}
\def\int{\mathop{\rm int}} 
\def\vec#1#2{#1_1,\ldots{},#1_{#2}}
\def\Vec#1#2{#1_0,\ldots{},#1_{#2}}
\title{Tangled Closure Algebras}
\author{Robert Goldblatt\thanks{School of Mathematics and Statistics, Victoria University of Wellington, New Zealand.
{\tt sms.vuw.ac.nz/\~{}rob/}}
\enspace and\enspace
 Ian Hodkinson\thanks{Department of Computing, Imperial College London, UK.
{\tt www.doc.ic.ac.uk/\~{}imh/}.
}}
\begin{document}
\maketitle

\centerline{Dedicated to Bernhard Banaschewski on the occasion of his 90th birthday}

\bigskip
\begin{abstract}
The tangled closure of a collection of subsets of a topological space is the largest subset in which each member of the collection is dense. This operation models a logical  `tangle modality' connective, of significance in finite model theory.
 Here we study an abstract equational algebraic formulation of the operation which generalises the McKinsey-Tarski theory of closure algebras. We show that any dissectable tangled closure algebra, such as the algebra of subsets of any metric space without isolated points,  contains copies of every finite  tangled closure algebra. We then exhibit an example of a tangled closure algebra that cannot be embedded into any complete tangled closure algebra, so it has no MacNeille completion and no spatial representation.
\end{abstract}

\emph{Keywords}: closure algebra, tangled closure, tangle modality,  fixed point, quasi-order, Alexandroff topology,  dense-in-itself, dissectable, MacNeille completion.

\section{Introduction}

McKinsey  and Tarski \cite{mcki:alge44,mcki:clos46} defined a \emph{closure algebra} as a Boolean algebra equipped with a unary function $\C$ that satisfies  axioms of Kuratowski \cite{kura:oper22} for the operation of forming the topological closure of a set. They graphically revealed  the intricacy of the structure of many familiar topological spaces by defining a notion of `dissectable' closure algebra, showing that any such algebra contains copies of every finite closure algebra, and proving that any metric space without isolated points has a dissectable algebra of subsets. This work has been described \cite{john:elem01} as the first attempt to do pointless topology, a subject that has been a significant theme in the work of Bernard Banaschewski.

Our aim here is to generalise this theory to  a study of \emph{tangled closure}. In a topological space this operation assigns to each finite collection $\G$ of subsets a set $\C^t\G$ which the largest subset in which each member of $\G$ is dense. When $\G$ has one member, $\C^t\{\g\}$ is just the usual topological closure $\C\g$ of $\g$. In an order topology, determined by some quasi-ordering relation $R$, a point $x$ belongs to the tangled closure $\C^t\G$ iff there exists an `endless $R$-path' 
 $xRx_1\cdots x_nRx_{n+1}\cdots$ starting from $x$ such that the path enters each set belonging to $\G$ infinitely often.
 
 This order-theoretic interpretation has been used to model a propositional connective known as the \emph{tangle modality}, which was introduced by Dawar and Otto \cite{dawa:moda09}  in an analysis of logical formulas whose satisfaction is invariant under certain `bisimulation' relations between models.
 A well-known result of van Benthem  \cite{bent:moda76corr,bent:moda83} states that a first-order formula is invariant under bisimulations between arbitrary models iff that formula is equivalent to a formula of the basic language of propositional modal logic. This result continues to hold for bisimulation-invariance over any elementary class of models, such as the quasi-orderings, as well as over the class of all finite models. But on restriction to the class of all finite quasi-orderings (and some of its subclasses), the picture changes. 
 Propositional formulas involving the tangle modality, which are bisimulation-invariant, become first-order definable in this setting, and van Benthem's result no longer holds. 
 Instead, a first-order formula is bisimulation-invariant over the finite quasi-orderings iff it is equivalent to a formula of the language that enriches basic modal logic by the addition of the tangle modality. Moreover, \cite{dawa:moda09} showed   that the bisimulation-invariant fragment of monadic second order logic, which is equivalent over arbitrary models to the much more powerful modal mu-calculus, collapses over finite quasi-orderings to the first-order fragment,  so is also equivalent to the language with the tangle modality. The name `tangle' was introduced by Fern\'{a}ndez-Duque \cite{fern:tang11,fern:tang12} who axiomatised the tangle modal  logic of finite quasi-orderings. Subsequently we have made an extensive study \cite{gold:spat14,gold:fini16,gold:spat16,gold:tang16} of a range of logics with this connective.
 
 That accounts for the motivating origin of $\C^t\G$, but here we subject it to an abstract algebraic analysis, defining a tangled closure algebra as a pair  $(A,\C^t)$ with $\C^t$ an operation on finite subsets of a Boolean algebra $A$,  with the restriction of $\C^t$ to one-element sets being a closure operator $\C$. We require  $\C^t$  to satisfy equational conditions ensuring that $\C^t\G$ is the greatest fixed point of the function  $a\mapsto\meet_{\g\in\G}\C(\g\land a)$. We study homomorphisms and subalgebras of tangled closure algebras, and use the logical Lindenbaum-Tarski  algebra construction to produce freely generated tangled closure algebras.  Our main results extend those of McKinsey and Tarski by showing that if a tangled closure algebra  $(A,\C^t)$ is dissectable, then any finite tangled closure algebra can be isomorphically embedded into the \emph{relativised} algebra of all elements below some open  element of $(A,\C^t)$. Furthermore, if $(A,\C^t)$ is totally disconnected (e.g.\ the algebra of subsets of any zero-dimensional metric space without isolated points), then the embedding can be mapped into the relativisation to \emph{any} non-zero open element.
 
As is well known, every Boolean algebra $A$ has order-complete extensions, including the extension given by the Stone representation theory, and the MacNeille completion, which is a complete Boolean algebra $B$ extending $A$ with each element of $B$ being the join of a subset of $A$. A closure algebra  also has complete extensions of both these kinds. But in our final section we construct a tangled closure algebra that has no embedding into any complete tangled closure algebra at all. In particular, it cannot be represented as an algebra of subsets of a topological space.

\section{Tangled Closure}
Let $A$ be a Boolean algebra with signature $\land,\ \lor,\ -,0,1$. Define the Boolean implication operation in $A$ by
 $a\imp b=-a\lor b$, and put $a\liff b=(a\imp b)\land(b\imp a)$.
 Let $\join E$ and $\meet E$ denote the join and meet of a subset $E$ of $A$ when these exist. We sometimes write them as 
 $\join_A E$ and $\meet_A E$ to clarify which algebra they are being defined in.
 
 A \emph{closure operator} on $A$ is a function $\C:A\to A$ that is additive, normal,  inflationary and idempotent, i.e.\ satisfies the equational conditions
 $$
 \C(a\lor b)=\C a\lor\C b, 
 \quad \C 0=0, 
 \quad a\leq \C a
 = \C\C a.
 $$
$\C$ is then \emph{monotonic}, i.e.\ $a\leq b$ implies $\C a\leq\C b$, and \emph{finitely additive} in the sense that 
$\C\join\G=\join\{\C\g:\g\in\G\}$ for all finite $\G\sub A$. The pair $(A,\C)$ is called a \emph{closure algebra}. An element $a\in A$ is called \emph{closed} if $a=\C a$, which is equivalent to having $a=\C b$ for some $b$.

In a closure algebra, $\C$ has a dual \emph{interior} operation $\I:A\to A$ defined by  $\I a=-\C-a$. This is also mononotonic;  \emph{multiplicative} in the sense that  $\I\meet\G=\meet\{\I\g:\g\in\G\}$ for all finite $\G$; and has $\I 1=1$ and $\I\I a=\I a\leq a$. An element $a$ is called \emph{open} if $a=\I a$, which is equivalent to having $a=\I b$ for some $b$.

A basic property of all closure algebras that we make use of is that
\begin{equation}  \label{ICprop}
\I a\land\C b\leq\C(a\land b).
\end{equation}
In addition to the original paper \cite{mcki:alge44}, there is extensive information about closure algebras in Chapter III of \cite{rasi:math63}, where they are called \emph{topological Boolean algebras}. 

Let $\c P_{fin}A$ be the set of finite non-empty subsets of $A$. A function  $\C^t:\c P_{fin}A\to A$ induces a unary function
 $\C:A\to A$ by putting $\C a=\C^t\{a\}$, and hence a dual operation $\I$ that has  $\I a=-\C^t\{-a\}$.
We will write these operations as $\C^t_A$, $\C_A$, $\I_A$, when needing to distinguish which algebra we are in. 

 We say that 
$\C^t$ is a \emph{tangled closure operator}, and $(A,\C^t)$ is a \emph{tangled closure algebra},
if its induced $\C$ is a {closure operator }on $A$, and the following hold for all $\G\in\c P_{fin}A$ and $a\in A$:
\begin{description}
\item[Fix:]
$\C^t\G\leq  \meet_{\g\in\G}\C(\g\land\C^t\G)$, 
\item[Ind:]
$\I(a\imp \meet_{\g\in\G}\C(\g\land a))\land a\leq \C^t\G$.
\end{description}
These conditions are evidently equational, e.g.\ Fix is equivalent to
$
\C^t\G\land  \meet_{\g\in\G}\C(\g\land\C^t\G)=\C^t\G.
$
The pair $(A,\C)$ will be called the \emph{closure algebra reduct} of $(A,\C^t)$.

\begin{lemma} \label{lem:defCt}
In any tangled closure algebra, it holds in general that
\begin{equation}  \label{defCt}
\C^t\G  =\bigvee\{a\in A: a\leq\meet_{\g\in\G}\C(\g\land a)\}.
\end{equation}
\end{lemma}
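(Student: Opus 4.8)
The plan is to prove the equality in \eqref{defCt} by the usual two-sided inequality argument, using \textbf{Fix} for one direction and \textbf{Ind} for the other. Write $b=\bigvee\{a\in A: a\leq\meet_{\g\in\G}\C(\g\land a)\}$ for the right-hand side; note first that this join need not obviously exist in an arbitrary (non-complete) Boolean algebra, so strictly speaking the statement should be read as asserting that the set on the right has a least upper bound and it equals $\C^t\G$. Thus what I would actually show is: (i) $\C^t\G$ is an upper bound of the set $S=\{a: a\leq\meet_{\g\in\G}\C(\g\land a)\}$, and (ii) $\C^t\G\in S$. Together these say $\C^t\G$ is the greatest element of $S$, hence a fortiori its join, which gives \eqref{defCt}.

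For (ii), that $\C^t\G\in S$, is immediate: it is exactly the content of \textbf{Fix}, which says $\C^t\G\leq\meet_{\g\in\G}\C(\g\land\C^t\G)$.

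For (i), the main work, I need: if $a\leq\meet_{\g\in\G}\C(\g\land a)$ then $a\leq\C^t\G$. The idea is to feed such an $a$ into \textbf{Ind}, whose conclusion is $\I(a\imp\meet_{\g\in\G}\C(\g\land a))\land a\leq\C^t\G$. So it suffices to show that the hypothesis $a\leq\meet_{\g\in\G}\C(\g\land a)$ forces $\I(a\imp\meet_{\g\in\G}\C(\g\land a))\land a=a$, i.e.\ $a\leq\I(a\imp\meet_{\g\in\G}\C(\g\land a))$. Now $a\leq\meet_{\g\in\G}\C(\g\land a)$ is equivalent, by Boolean algebra, to $a\imp\meet_{\g\in\G}\C(\g\land a)=1$; since $\I 1=1$, we get $\I(a\imp\meet_{\g\in\G}\C(\g\land a))=1\geq a$, as required. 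Then \textbf{Ind} yields $a=\I(\cdots)\land a\leq\C^t\G$, so $\C^t\G$ is an upper bound of $S$.

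The only subtlety — and the thing I would be careful to state rather than a genuine obstacle — is the existence-of-join issue just mentioned: the lemma is really the statement that $\C^t\G$ is the \emph{greatest} member of $S$ (equivalently, the greatest fixed point of $a\mapsto\meet_{\g\in\G}\C(\g\land a)$, since any member of $S$ is easily upgraded to a fixed point by monotonicity, or one simply observes $\C^t\G$ itself is a fixed point by combining \textbf{Fix} with \textbf{Ind} applied to $\C^t\G$), and the displayed join is then automatically this greatest element. No completeness of $A$ is needed. The calculation in step (i) uses nothing beyond the Boolean identity $x\leq y\iff x\imp y=1$, the axiom $\I 1=1$, and \textbf{Ind}; step (ii) is just \textbf{Fix}.
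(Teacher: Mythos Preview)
Your proof is correct and follows essentially the same route as the paper: show $\C^t\G\in S$ by \textbf{Fix}, and show $\C^t\G$ is an upper bound of $S$ by observing that $a\in S$ makes $a\imp\meet_{\g\in\G}\C(\g\land a)=1$, whence $\I(\cdots)=1$ and \textbf{Ind} gives $a\leq\C^t\G$. Your explicit remark that this establishes $\C^t\G$ as the greatest element of $S$ (so the join exists without assuming completeness) is a welcome clarification that the paper leaves implicit.
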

\begin{proof}
Let $f_\G(a)= \meet_{\g\in\G}\C(\g\land a)$. This defines a function $f_\G:A\to A$ that is monotonic. Say that $a$ is a \emph{post-fixed point for} $\G$ if $a\leq f_\G(a)$.
Let 
$S_\G=\{a\in A: a\leq f_\G(a)\}$ be the set of all post-fixed points for $\G$. \eqref{defCt} asserts that $\C^t\G$ is the join of 
$S_\G$.

Now Fix states that $\C^t\G\leq f_\G(\C^t\G)$, hence $\C^t\G\in S_\G$.
But Ind implies that $\C^t\G$ is an upper bound of $S_\G$, for if $a\in S_\G$, then $\I(a\imp f_\G(a))=\I 1=1$, so Ind reduces in this case to the assertion that  $a\leq\C^t\G$.

Thus $\C^t\G$ is both a member of $S_\G$ and an upper bound of it, hence is its least upper bound.
\end{proof}

\begin{corollary}
$\C^t\G= \meet_{\g\in\G}\C(\g\land\C^t\G)$. Moreover $\C^t\G$ is the greatest (post-)fixed point of $f_\G$.
\end{corollary}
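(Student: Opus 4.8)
The plan is to deduce everything from Lemma~\ref{lem:defCt}, which has already established that $\C^t\G=\join S_\G$ where $S_\G=\{a\in A: a\leq f_\G(a)\}$, and from the two observations made inside its proof: that $\C^t\G\in S_\G$ (by Fix) and that $\C^t\G$ is an upper bound of $S_\G$ (by Ind). The second sentence of the corollary is then essentially immediate: $\C^t\G$ is a member of $S_\G$ that dominates every member of $S_\G$, so it is the greatest post-fixed point of $f_\G$. Once the first sentence is proved, $\C^t\G$ is in particular a fixed point, and since every fixed point is a post-fixed point, being the greatest post-fixed point forces it to be the greatest fixed point as well.

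For the first sentence I would run the standard Knaster--Tarski argument. The inequality $\C^t\G\leq f_\G(\C^t\G)$ is exactly Fix. For the reverse inequality, I would use that $f_\G$ is monotonic (noted in the proof of Lemma~\ref{lem:defCt}): applying $f_\G$ to $\C^t\G\leq f_\G(\C^t\G)$ yields $f_\G(\C^t\G)\leq f_\G(f_\G(\C^t\G))$, which says precisely that $f_\G(\C^t\G)$ is itself a post-fixed point, i.e.\ $f_\G(\C^t\G)\in S_\G$. Since $\C^t\G$ is an upper bound of $S_\G$, this gives $f_\G(\C^t\G)\leq\C^t\G$, and combined with Fix we get $\C^t\G=f_\G(\C^t\G)=\meet_{\g\in\G}\C(\g\land\C^t\G)$.

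There is no real obstacle here; the only point requiring a moment's thought is that one cannot simply assert $f_\G(\C^t\G)=\C^t\G$ from the definition, since $\C^t\G$ is only known a priori to be a post-fixed point, not a fixed point — this is exactly why the monotonicity step (showing $f_\G(\C^t\G)$ is again a post-fixed point) is needed. Everything else is a direct appeal to the lemma and its proof.
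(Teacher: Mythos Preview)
Your proposal is correct and follows essentially the same approach as the paper's own proof: both use Fix to get $\C^t\G\leq f_\G(\C^t\G)$, then apply monotonicity of $f_\G$ to conclude that $f_\G(\C^t\G)$ is itself a post-fixed point and hence bounded above by $\C^t\G$, giving equality; the greatest (post-)fixed point claims then follow immediately.
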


\begin{proof}
As above $\C^t\Gamma \leq  f_\Gamma(\C^t\Gamma)$, so monotonicity of $f_\G$ yields $f_\G\C^t\G\leq f_\G (f_\G\C^t)$, showing $f_\G\C^t\G$ is also a post-fixed point of $f_\G$, hence $ f_\G\C^t\G\leq \C^t\G$. Altogether then $\C^t\G=f_\G\C^t\G$, so $\C^t\G$ is a fixed point of $f_\G$. Since all such fixed points belong to $S_\G$, $\C^t\G$ is the greatest of them, as well as of the post-fixed points.
\end{proof}

Lemma \ref{lem:defCt} implies that $\C^t$ is uniquely determined by the unary $\C$ it induces. Furthermore:

\begin{theorem} \label{thm:compcla}
Any \textbf{complete} closure algebra $(A,\C)$ expands uniquely to a tangled closure algebra $(A,\C^t)$ inducing $\C$, by taking \eqref{defCt} as the \textbf{definition} of $\C^t\G$.  
\end{theorem}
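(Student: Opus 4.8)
The plan is to define, for any complete closure algebra $(A,\C)$, the operation $\C^t$ by the right-hand side of \eqref{defCt}, i.e.\ $\C^t\G=\join S_\G$ where $S_\G=\{a\in A:a\leq f_\G(a)\}$ and $f_\G(a)=\meet_{\g\in\G}\C(\g\land a)$; this join exists because $A$ is complete. Then I would verify three things: (i) the induced unary operation $\C a=\C^t\{a\}$ agrees with the original $\C$; (ii) $(A,\C^t)$ satisfies Fix; and (iii) $(A,\C^t)$ satisfies Ind. Uniqueness is then immediate from Lemma~\ref{lem:defCt}, since that lemma shows any tangled closure operator inducing $\C$ must be given by \eqref{defCt}.

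For (i): with $\G=\{a\}$ we have $f_{\{a\}}(b)=\C(a\land b)$, and I must show $\join\{b:b\leq\C(a\land b)\}=\C a$. The inclusion $\leq$ holds because each such $b$ satisfies $b\leq\C(a\land b)\leq\C a$ by monotonicity; the inclusion $\geq$ holds because $b=\C a$ is itself a post-fixed point, using $\C a\leq\C(a\land\C a)$ which follows from $\C a=\C\C a=\C(\C a\land a)$ (as $a\leq\C a$). So $\C a\in S_{\{a\}}$ and the join is exactly $\C a$.

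For (ii), Fix: this is the Knaster--Tarski-style argument that the join of all post-fixed points is itself a post-fixed point. Let $c=\C^t\G=\join S_\G$. For each $a\in S_\G$ we have $a\leq f_\G(a)\leq f_\G(c)$ by monotonicity of $f_\G$, so $f_\G(c)$ is an upper bound of $S_\G$, hence $c\leq f_\G(c)$, which is exactly Fix. (In fact this also gives $f_\G(c)\in S_\G$, so $c$ is a fixed point, matching the Corollary — but Fix alone is all that is needed here.)

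For (iii), Ind: I must show $\I(a\imp f_\G(a))\land a\leq\C^t\G$ for all $a$. Set $u=\I(a\imp f_\G(a))$, an open element, and let $b=u\land a$; it suffices to prove $b\in S_\G$, i.e.\ $b\leq f_\G(b)=\meet_{\g\in\G}\C(\g\land u\land a)$. Fix $\g\in\G$. We have $b=u\land a\leq u\land(a\imp f_\G(a))\land a\leq u\land f_\G(a)\leq u\land\C(\g\land a)$. Now apply property \eqref{ICprop} with the open element $u$: $\I u\land\C(\g\land a)=u\land\C(\g\land a)\leq\C(u\land\g\land a)=\C(\g\land b)$. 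Hence $b\leq\C(\g\land b)$ for every $\g\in\G$, so $b\in S_\G$ and therefore $b\leq\join S_\G=\C^t\G$; since $\I(a\imp f_\G(a))\land a\leq u\land a=b$, Ind follows. The only real subtlety, and the step I would expect to need the most care, is this use of \eqref{ICprop} to push the open factor $u$ inside the closure in each conjunct — once that is in hand the rest is routine order-algebra.
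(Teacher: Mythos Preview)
Your proof is correct and follows essentially the same route as the paper's: the same Knaster--Tarski argument for Fix, the same verification that $\C a=\C^t\{a\}$, and the same Ind argument showing $\I(a\imp f_\G(a))\land a\in S_\G$ via \eqref{ICprop}, differing only in notation (your $u$ and $b$ are the paper's $\I b$ and $\I b\land a$) and in the order of the three verifications.
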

\begin{proof}
For each $\G\in\c P_{fin}A$, define $\C^t\G=\join S_\G$, where $S_\G=\{a: a\leq f_\G(a)\}$  as above. Then we need to derive Fix and Ind for $\C^t$ thus defined.
First, if $a\in S_\G$, then $a\leq \C^t\G$, so $f_\G(a)\leq f_\G(\C^t\G)$. But $a\leq f_\G(a)$, so this shows that 
$a\leq f_\G(\C^t\G)$, for all $a\in S_\G$. Hence $\join S_\G\leq  f_\G(\C^t\G)$, i.e.\ $\C^t\G\leq f _\G(\C^t\G)$, which is Fix.

The derivation of Ind is more lengthy, and uses some basic properties of closure algebras. Given $\G$ and $a$, let 
$b=(a\imp \meet_{\g\in\G}\C(\g\land a))$. Ind asserts $\I b\land a\leq\C^t\G$, so to  prove this it is enough to show that $\I b\land a$ belongs to $S_\G$, i.e.\ $\I b\land a$ is a post-fixed point of $f_\G$. Taking an arbitrary $\g'\in\G$ we have

\begin{tabular}{ll}  
$\I b\land a$ &
\\
$= \I b\land (a\imp \meet_{\g\in\G}\C(\g\land a))\land a$
& as $\I b=\I b\land b$
\\
$\leq \I b\land \meet_{\g\in\G}\C(\g\land a)$
& by Boolean algebra
\\
$\leq \I\I b\land \C(\g'\land a)$
& $\I b=\I\I b$ and Boolean algebra
\\
$\leq \C(\I b\land \g'\land a)$
&  by \eqref{ICprop}
\end{tabular}

\noindent
This shows that $\I b\land a\leq \C( \g'\land \I b\land a)$ for all $\g'\in\G$, hence
$$
\I b\land a\leq \meet_{\g\in\G}\C( \g\land \I b\land a).
$$
But that says  $\I b\land a\leq f_\G(\I b\land a)$, i.e. $\I b\land a\in S_\G$, hence 
$\I b\land a\leq\join S_\G=\C^t\G$, which is Ind.

It remains to show that $\C$ is the closure operator induced by $\C^t$. Let $\G$ be any singleton $\{\g\}$. Then if $a\in S_{\{\g\}}$, $a\leq \C(\g\land a)\leq \C\g$. So $\C\g$ is an upper bound of $S_{\{\g\}}$. But 
$\C\g\leq\C(\g\land\C\g)$, since $\g\leq\C\g$, showing that $\C\g$ also belongs to $S_{\{\g\}}$.
Hence $\C\g=\join S_{\{\g\}}=\C^t\{\g\}$ as required.
\end{proof}

\begin{example}[Spatial Tangled Closure] \em  \label{ex:spatial}
The paradigm of a closure algebra is $(A_S,\C_S)$ where $S$ is any topological space. Here $A_S$ is the Boolean powerset algebra of all subsets of $S$, and $\C_S(a)$ is the topological closure of the set $a\sub S$, the intersection of all closed supersets of $a$. This is a complete closure algebra in which $\join E=\bigcup E$ and $\meet E=\bigcap E$ for all $E\sub A_S$. By Theorem \ref{thm:compcla}, $\C_S$ has a unique expansion to a tangled closure operator 
$\C^t_S$.  A point belongs to $\C_S^t\G$ iff it belongs to some set $a$ such that for all $\g\in\G$, $a\sub \C_S(\g\cap a)$, so $\g$ is dense in $a$ in the sense that any open neighbourhood of any point of $a$ contains a point in $\g$ and $a$. Since $\C^t\G$ is the greatest post-fixed point for $\G$, it is the largest set in which every member of $\G$ is dense.
\end{example}

\begin{example}[Quasi-orders and Alexandroff Spaces]\em    \label{ex:quoset}
A quasi-order is  a reflexive transitive binary relation $R$ on a set $S$. The pair  $(S,R)$ is a  \emph{quasi-ordered set}. Each $x\in S$ has the set $R(x)=\{y:xRy\}$ of $R$-\emph{successors}. Then $y\in R(x)\cap R(z)$ implies $y\in R(y)\sub R(x)\cap R(z)$, so the collection $\{R(x):x\in S\}$ of successor sets is a basis for a topology on $S$, the \emph{Alexandroff topology}. Its open sets are the \emph{up-sets}, those subsets $a$ of $S$ such that are closed upwards in the quasi-ordering in the sense that $x\in a$ implies $R(x)\sub a$. Its closed sets are the \emph{down-sets}, the sets $a$ for which $xRy\in a$ implies $x\in a$.
Its closure operator $\C_R$ has $\C_R(a)=R^{-1}(a)=\{x:\exists y(xRy\in a)\}$, giving the closure algebra $(A_S,\C_R)$.
Hence by the preceding Example, the tangled closure operator $\C^t_R$ of this space has
$$
\C^t_R\G=\bigcup \{a\sub S: a\sub{{\bigcap}_{\g\in\G}R^{-1}(\g\cap a)}\}.
$$
To give an alternative characterisation of $\C^t_R$, define an \emph{endless $R$-path} to be a sequence 
$\{x_n:n<\omega\}$ in $S$ such that $x_nRx_{n+1}$ for all $n$. (The terms $x_n$ of the sequence need not be distinct. Indeed $S$ may be finite.) Then it can be shown that
\begin{quote}
$x\in\C^t_R\G$ iff there exists an endless $R$-path  $\{x_n:n<\omega\}$ in $S$ with $x=x_0$, such that for each $\g\in \G$ there are infinitely many $n<\omega$ such that $x_n\in \g$.
\end{quote}
(see  \cite[\S4.1]{fern:tang12}). We use this characterisation in several places below.
  \qed
\end{example}

The next theorem records properties that will be used in Section \ref{sec:nocomp} in constructing a tangled closure algebra with no complete extension.

\begin{theorem}  \label{thm:cong}
In any tangled closure algebra $ (A,\C^t)$, the following hold for all $\G\in\c P_{fin}A$.
\begin{enumerate}[\rm(1)]
\item
$\C^t\G$ is closed, i.e.\ $\C\C^t\G=\C^t\G$.
\item 
If $\G'=\{\g':\g\in\G\}\sub A$, then
$$
\meet_{\g\in\G}\I(\g\liff\g')\leq\I\big(\C^t\G\liff \C^t\G'\big).
$$
\end{enumerate}
\end{theorem}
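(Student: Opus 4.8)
For part (1), the plan is to use the Corollary to Lemma~\ref{lem:defCt}, which gives $\C^t\G=\meet_{\g\in\G}\C(\g\land\C^t\G)$. Pick any $\g'\in\G$; then $\C^t\G\leq\C(\g'\land\C^t\G)\leq\C\C^t\G$ by monotonicity of $\C$, while $\C^t\G\leq\C\C^t\G$ already holds by inflationarity. For the reverse inequality, apply $\C$ to the identity: $\C\C^t\G=\C\meet_{\g\in\G}\C(\g\land\C^t\G)\leq\C\C(\g'\land\C^t\G)=\C(\g'\land\C^t\G)$ for each $\g'$ (using monotonicity and idempotence of $\C$), hence $\C\C^t\G\leq\meet_{\g\in\G}\C(\g\land\C^t\G)=\C^t\G$. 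So $\C\C^t\G=\C^t\G$. This part is routine.

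For part (2), write $b=\meet_{\g\in\G}\I(\g\liff\g')$; this is an open element, since each $\I(\g\liff\g')$ is open and a finite meet of open elements is open. The goal is $b\leq\I(\C^t\G\liff\C^t\G')$, and since $b$ is open and $\I$ is monotonic with $\I\I=\I$, it suffices to show $b\leq\C^t\G\liff\C^t\G'$, i.e.\ $b\land\C^t\G\leq\C^t\G'$ and, symmetrically, $b\land\C^t\G'\leq\C^t\G$. By symmetry I only need the first. The natural route is to show that $b\land\C^t\G$ is a post-fixed point of $f_{\G'}$, i.e.\ $b\land\C^t\G\leq\meet_{\g\in\G}\C(\g'\land b\land\C^t\G)$, and then invoke Lemma~\ref{lem:defCt} (equivalently the characterisation \eqref{defCt}, or Ind directly as in the proof of Theorem~\ref{thm:compcla}) to conclude $b\land\C^t\G\leq\C^t\G'$.

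To verify the post-fixed point condition, fix $\g'\in\G'$ (coming from $\g\in\G$). Starting from Fix, $\C^t\G\leq\C(\g\land\C^t\G)$, so $b\land\C^t\G\leq b\land\C(\g\land\C^t\G)$. Now I want to replace $\g$ by $\g'$ inside the closure, at the cost of the guard $b$. The key manoeuvre is: $b\leq\I(\g\liff\g')$, and $\I(\g\liff\g')\land(\g\land\C^t\G)\leq\I(\g\liff\g')\land\g\leq\g'$ wait — more carefully, I want $\I(\g\liff\g')\land\C(\g\land\C^t\G)\leq\C(\g'\land\C^t\G\land\,\cdot\,)$. Use \eqref{ICprop}: $\I(\g\liff\g')\land\C(\g\land\C^t\G)\leq\C\big((\g\liff\g')\land\g\land\C^t\G\big)$, and $(\g\liff\g')\land\g\leq\g'$ in Boolean algebra, so this is $\leq\C(\g'\land\C^t\G)$. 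To also carry $b$ itself inside (so the result is really a post-fixed point of $f_{\G'}$ at $b\land\C^t\G$, not just at $\C^t\G$), apply \eqref{ICprop} with $b$ (which is open, $b=\I b$): $b\land\C(\g'\land\C^t\G)=\I b\land\C(\g'\land\C^t\G)\leq\C(b\land\g'\land\C^t\G)$. Combining, $b\land\C^t\G\leq b\land\I(\g\liff\g')\land\C(\g\land\C^t\G)\leq\C(\g'\land b\land\C^t\G)$ for every $\g'\in\G'$, hence $b\land\C^t\G\leq\meet_{\g'\in\G'}\C(\g'\land(b\land\C^t\G))=f_{\G'}(b\land\C^t\G)$, so $b\land\C^t\G\in S_{\G'}$ and therefore $b\land\C^t\G\leq\C^t\G'$. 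The symmetric argument gives $b\land\C^t\G'\leq\C^t\G$, completing part (2).

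The main obstacle is purely in the bookkeeping of part (2): one must use \eqref{ICprop} twice (once to swap $\g$ for $\g'$ via $\g\liff\g'$, once to pull the open guard $b$ inside the closure) and be careful that the element on the right of the final inequality is exactly $f_{\G'}$ evaluated at $b\land\C^t\G$ and not at something larger, so that Lemma~\ref{lem:defCt} applies. Everything else is straightforward Boolean and closure-algebra manipulation together with the fact that finite meets of open elements are open.
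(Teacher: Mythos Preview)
Your proof is correct and follows essentially the same approach as the paper's. Part~(1) is virtually identical. In part~(2) there is a minor technical difference worth noting: the paper, after establishing $a\land\C^t\G\leq\meet_{\g\in\G}\C(\g'\land\C^t\G)$ (where $a$ is your $b$), rewrites this as $a\leq\C^t\G\imp f_{\G'}(\C^t\G)$, uses openness of $a$ to get $a\leq\I(\C^t\G\imp f_{\G'}(\C^t\G))$, and then applies Ind directly with $\C^t\G$ in the role of the element~$a$ appearing in Ind. You instead take the extra step of pulling $b$ inside the closure via a second use of~\eqref{ICprop}, so as to exhibit $b\land\C^t\G$ itself as a post-fixed point of $f_{\G'}$ and invoke Lemma~\ref{lem:defCt}. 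Both routes are sound; the paper's is one application of~\eqref{ICprop} shorter, while yours makes the post-fixed-point structure more explicit.
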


\begin{proof}
\begin{enumerate}[\rm(1)]
\item 
We have $\C^t\G\leq\C\C^t\G$ as $\C$ is a closure operator, so we need to show the reverse inequality $\C\C^t\G\leq\C^t\G$. For this it suffices by Lemma \ref{lem:defCt} to show that $\C\C^t\G$ is a post-fixed point for $\G$.
Now for any $\g\in\G$, by Fix and $\C$-monotonicity 
$\C\C^t\G\leq  \C\C(\g\land\C^t\G)=\C(\g\land\C^t\G)$.
But by closure algebra properties $\C(\g\land\C^t\G)\leq\C(\g\land\C\C^t\G)$.
Altogether this implies that
$\C\C^t\G\leq  \C(\g\land\C\C^t\G)$ for all $\g\in\G$. Hence $\C\C^t\G\in S_\G$ as required.

\item
Let $a=\meet_{\g\in\G}\I(\g\liff\g')$. As $\I$ is multiplicative, $a=\I\meet_{\g\in\G}(\g\liff\g')$, so $a$ is open and therefore $a=\I a$. Now for each $\g\in\G$, using Fix we have
$$
a\land\C^t\G \leq \I(\g\liff\g')\land \C(\g\land\C^t\G) \leq \C((\g\liff\g')\land \g\land\C^t\G)
$$
by \eqref{ICprop}. Since $(\g\liff\g')\land \g\leq\g'$ and $\C$ is monotonic, this implies $a\land\C^t\G\leq  \C(\g'\land\C^t\G)$.
Thus $a\land\C^t\G\leq  \meet_{\g\in\G}\C(\g'\land\C^t\G)$, so $a\leq\C^t\G\imp  \meet_{\g\in\G}\C(\g'\land\C^t\G)$. Hence
$$
a=\I a\leq\I(\C^t\G\imp  \meet_{\g\in\G}\C(\g'\land\C^t\G)).
$$
Then $a\land\C^t\G\leq \I(\C^t\G\imp  \meet_{\g\in\G}\C(\g'\land\C^t\G))\land\C^t\G\leq \C^t\G'$ by Ind for $\G'$. It follows that 
$a\leq \C^t\G\imp\C^t\G'.$
Interchanging $\G$ and $\G'$ here, and using $\g\liff\g'=\g'\liff\g$, we likewise show $a\leq \C^t\G'\imp\C^t\G$. Hence
$a\leq \C^t\G\liff\C^t\G'$. Therefore
$
a=\I a\leq \I(\C^t\G\liff\C^t\G'),
$
which is the desired result.
\end{enumerate}
\end{proof}

\section{Homomorphisms, Subalgebras, Free Algebras}  \label{hsf}

A \emph{homomorphism} $f:(A,\C_A^t)\to(B,\C_B^t)$ between algebras of the type of tangled closure algebras  is a  Boolean algebra homomorphism $f:A\to B$ that preserves the $\C^t$-operations in the sense that
$$
f(\C^t_A\G)=\C_B^t\{f\g:\g\in\G\}.
$$
If $f$ is injective we call it an \emph{embedding}. If it is surjective, then it preserves validity of equations, hence if                       $(A,\C_A^t)$ is a tangled closure algebra, then so is $(B,\C_B^t)$. If $f$ is bijective then it is an \emph{isomorphism}.

A homomorphism of tangled closure algebras preserves the associated closure operators, meaning that   $f(\C_A(a))=\C_B f(a)$. In general a Boolean homomorphism $f:A\to B$ that is a closure algebra homomorphism in this sense need not preserve tangled closure, as we will see later in Section \ref{sec:nocomp}. However, if $f$  is a closure algebra  \emph{isomorphism}  from $(A,\C_A)$ onto $(B,\C_B)$, then it will preserve tangled closure  and be a tangled closure algebra isomorphism from 
$(A,\C_A^t)$ onto $(B,\C_B^t)$. This follows by \eqref{defCt}, since Boolean isomorphisms preserve all existing joins.

\begin{theorem}  \label{thm:repfinite}
Any finite tangled closure algebra $(A,\C_A^t)$ is isomorphic to the powerset algebra $(A_S,\C^t_R)$ of some finite quasi-ordered set $(S,R)$ (see Example \ref{ex:quoset}).
\end{theorem}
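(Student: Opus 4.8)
The plan is to use the standard Stone/Jónsson--Tarski style representation for finite closure algebras and then check that it transports the tangled closure operator as well. First I would observe that a finite Boolean algebra $A$ is atomic, so let $S$ be the set of atoms of $A$, and define a binary relation $R$ on $S$ by $xRy$ iff $y\leq\C x$ (equivalently, $y\land\C x=y$, using that atoms are join-irreducible). The closure-algebra axioms for $\C$ translate exactly into $R$ being reflexive (since $x\leq\C x$) and transitive (from $\C\C x=\C x$), so $(S,R)$ is a finite quasi-ordered set. The map $\eta:A\to A_S$ sending each $a$ to the set of atoms below it is the usual Boolean isomorphism of a finite Boolean algebra onto its atom powerset, and the classical McKinsey--Tarski computation shows $\eta(\C a)=R^{-1}(\eta a)=\C_R(\eta a)$, so $\eta$ is a closure-algebra isomorphism from $(A,\C)$ onto $(A_S,\C_R)$.

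The remaining point is to promote this to a tangled closure isomorphism. Here I would invoke the remark made just before the theorem: a closure-algebra \emph{isomorphism} automatically preserves tangled closure, because by Lemma~\ref{lem:defCt} the operation $\C^t\G$ is given by the join of the set $S_\G=\{a:a\leq\meet_{\g\in\G}\C(\g\land a)\}$, a formula built purely from the Boolean operations and $\C$, and a Boolean isomorphism preserves all existing joins (and in a finite, hence complete, algebra all joins exist). Concretely, $\eta$ maps $S_\G$ bijectively onto $S_{\eta\G}$ where $\eta\G=\{\eta\g:\g\in\G\}$, so $\eta(\C^t_A\G)=\eta(\join S_\G)=\join\eta[S_\G]=\join S_{\eta\G}=\C^t_R(\eta\G)$, using Lemma~\ref{lem:defCt} applied in $(A,\C^t_A)$ on the left and Theorem~\ref{thm:compcla} (which says the complete closure algebra $(A_S,\C_R)$ has a unique tangled expansion given by that same join formula) on the right. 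Thus $\eta$ is the required isomorphism $(A,\C^t_A)\cong(A_S,\C^t_R)$.

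I do not expect a serious obstacle: the only thing to be careful about is making sure the tangled operator on the powerset side is genuinely the one from Example~\ref{ex:quoset}, but that is exactly the content of Theorem~\ref{thm:compcla} together with the identity $\C_R(a)=R^{-1}(a)$ noted in that example, so no endless-path argument is needed. If anything, the mild subtlety is purely notational --- checking that $\{f\g:\g\in\G\}$ may collapse to fewer elements than $\G$ when $f=\eta$ is not injective, but since $\eta$ \emph{is} injective here this does not arise, and in any case $\C^t$ depends only on the underlying set $\G$, not on any indexing. So the proof is essentially: build $(S,R)$ from the atoms, cite the classical finite-closure-algebra representation to get a closure isomorphism, and then cite the earlier observation that closure isomorphisms preserve $\C^t$.
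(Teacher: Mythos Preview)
Your approach is essentially identical to the paper's: represent the finite Boolean algebra by its atoms, pull the closure operator across to get a quasi-order on the atom set, invoke the classical finite closure-algebra representation, and then cite the earlier observation that a closure-algebra isomorphism automatically lifts to a tangled-closure isomorphism (via Lemma~\ref{lem:defCt} and preservation of joins).

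There is one slip, however, and it makes the key computation fail as written: your relation $R$ is defined in the wrong direction. You set $xRy$ iff $y\leq\C x$, but with the convention of Example~\ref{ex:quoset} one has $\C_R(a)=R^{-1}(a)$, so $x\in\C_R\{y\}$ iff $xRy$; for $\eta(\C a)=\C_R(\eta a)$ to hold you therefore need $xRy$ iff $x\leq\C y$ (this is exactly the paper's ``$xRy$ iff $x\in\C'\{y\}$''). Your $R$ is still a quasi-order, but it is the \emph{converse} of the correct one, and with it $\eta$ would intertwine $\C$ with $(R^{-1})^{-1}$ rather than with $\C_R=R^{-1}$. Reversing the definition fixes everything; the rest of your argument goes through unchanged and matches the paper's proof.
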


\begin{proof} 
Being finite, $A$ is isomorphic to the powerset algebra $A_S$ where $S$ is the set of atoms of $A$. The closure operator $\C_A$ induced by $\C^t_A$ is transferred by the isomorphism to a closure operator $\C'$ on $A_S$. Here $\C'$ is equal to the operator $\C_R=R^{-1}$ of a quasi-order on $S$ defined by $xRy$ iff $x\in\C'\{y\}$.
This follows from work in \cite[Section 3]{jons:bool51} on complete and atomic algebras, and is set out explicitly in \cite[Lemma 1]{dumm:moda59}. 

Since the closure algebras $(A,\C_A)$ and $(A_S,\C_R)$ are isomorphic, it then follows that $(A,\C_A^t)$ and  $(A_S,\C^t_R)$ are isomorphic, as noted above.
\end{proof}

Another case in which a closure algebra homomorphism between tangled closure algebras must preserve tangled closure occurs when the domain of the homomorphism is \emph{finite}, as we now show.

\begin{theorem}  \label{findom}
Let $(A,\C_A^t)$ and $(B,\C_B^t)$ be tangled closure algebras and $f:A\to B$ be a closure algebra homomorphism between the associated closure algebra reducts $(A,\C_A)$ and $(B,\C_B)$. Suppose $A$ is finite. Then $f$ preserves the tangled closure operations $\C_A^t$ and $\C_B^t$.
\end{theorem}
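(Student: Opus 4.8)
The plan is to exploit the fact that a finite Boolean algebra $A$ is atomic and that $\C^t_A\G$ is, by Lemma \ref{lem:defCt}, the join of the finite set $S_\G$ of post-fixed points of $f_\G$ — in fact, by the Corollary, it is the \emph{greatest} such post-fixed point, and it is itself a fixed point: $\C^t_A\G = \meet_{\g\in\G}\C_A(\g\land\C^t_A\G)$. So fix $\G\in\c P_{fin}A$, write $c=\C^t_A\G$ and $\G'=\{f\g:\g\in\G\}\sub B$, and let $c' = \C^t_B\G'$. I want to show $f(c)=c'$.

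For the inequality $f(c)\leq c'$, I would apply $f$ to the fixed-point equation $c = \meet_{\g\in\G}\C_A(\g\land c)$. Since $f$ is a Boolean homomorphism it preserves finite meets, and since it is a closure algebra homomorphism it commutes with $\C$, giving $f(c) = \meet_{\g\in\G}\C_B(f\g\land f(c))$. That is exactly the statement that $f(c)$ is a (post-)fixed point of $f_{\G'}$ in $B$, so by Lemma \ref{lem:defCt} applied in $(B,\C^t_B)$ we get $f(c)\leq \C^t_B\G' = c'$. Note this direction uses only that $f$ preserves finite meets and $\C$; finiteness of $A$ is not needed here.

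The reverse inequality $c'\leq f(c)$ is where finiteness of $A$ does the work, and I expect it to be the main obstacle. The idea is that since $A$ is finite, $S_\G$ is finite, so the join $c=\join S_\G$ is actually attained and $c$ is the \emph{top} element of $S_\G$; hence $f(c)$ dominates $f(a)$ for every post-fixed point $a$ of $f_\G$ in $A$. The difficulty is that a post-fixed point $b$ of $f_{\G'}$ in $B$ need not be of the form $f(a)$. To get around this I would argue contrapositively: I want to produce, from any $b\in B$ with $b\leq f_{\G'}(b)$, a witness $a\in A$ that is a post-fixed point of $f_\G$ and with $b\leq f(a)$ — or, more directly, show that $b\leq f(c)$. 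Here is the route I would try. Set $a_0 = \meet\{a\in A : b\leq f(a)\}\in A$ (the meet of a finite set, so well-defined), the smallest element of $A$ whose $f$-image is above $b$; equivalently $a_0 = f^{-1}$-``hull'' of $b$. I claim $a_0\in S_\G$. To see $a_0\leq f_\G(a_0) = \meet_{\g\in\G}\C_A(\g\land a_0)$: fix $\g\in\G$. Since $b\leq f_{\G'}(b)\leq \C_B(f\g\land b)\leq\C_B(f\g\land f(a_0)) = \C_B f(\g\land a_0) = f(\C_A(\g\land a_0))$, the element $\C_A(\g\land a_0)\in A$ has $f$-image above $b$, so by minimality $a_0\leq\C_A(\g\land a_0)$. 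As $\g$ was arbitrary, $a_0\in S_\G$, hence $a_0\leq c$, hence $b\leq f(a_0)\leq f(c)$. Taking $b=c'$ (which is a post-fixed point of $f_{\G'}$ by the Corollary applied in $B$) gives $c'\leq f(c)$, completing the proof.

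The one subtlety to double-check in writing this up is that the set $\{a\in A : b\leq f(a)\}$ is nonempty — it contains $1$, since $f(1)=1\geq b$ — and closed under finite meets, so its meet $a_0$ is again a member; both hold because $f$ is a Boolean homomorphism and $A$ is finite. Everything else is the monotonicity of $f_\G$ and the basic closure-algebra identities already recorded, so no step beyond this should be problematic.
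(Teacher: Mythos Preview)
Your proposal is correct and follows essentially the same route as the paper's proof: for $f(c)\leq c'$ you show $f(c)$ is a post-fixed point for $\G'$ in $B$ (the paper uses Fix directly rather than the fixed-point equation from the Corollary, but this is immaterial), and for $c'\leq f(c)$ your element $a_0=\meet\{a\in A:b\leq f(a)\}$ is exactly the paper's $d=\meet D$ with $D=\{a\in A:\C^t_Bf\G\leq fa\}$, and the verification that $a_0$ is a post-fixed point for $\G$ proceeds identically. The only cosmetic difference is that you carry a general post-fixed point $b$ through the argument before specialising to $b=c'$, whereas the paper works with $\C^t_Bf\G$ throughout.
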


\begin{proof}
We need to show that if $\G\in\c P_{fin}A$, then $f\C^t_A\G=\C^t_Bf\G$, where $f\G=\{f\g:\g\in\G\}$. We use the fact that $f$ is monotonic and preserves finite meets and closure operators. Applying this to Fix for $\C^t_A\G$ gives that in $B$,
$$
f\C^t_A\G\leq  \meet_{\g\in\G}\C_B(f\g\land f\C^t_A\G).
$$
This means that $f\C^t_A\G$ is a post-fixed point for $f\G$ in $B$, so   by Lemma \ref{lem:defCt}, $f\C^t_A\G\leq\C^t_Bf\G$.

For the reverse inequality $\C^t_Bf\G\leq f\C^t_A\G$, let $D=\{a\in A:\C^t_Bf\G\leq fa\}$. Put $d=\meet D$, which exists in $A$ as $A$ is finite. Then  in $B$ we have
\begin{equation}  \label{ineqfd}
\C^t_Bf\G\leq \meet\{fa:a\in D\} =fd
\end{equation}
as $f$ preserves finite meets.
Now by  Fix for $\C^t_Bf\G$, \eqref{ineqfd} and preservation properties of $f$ we get
$$
\C^t_Bf\G \leq  \meet_{\g\in\G}\C_B(f\g\land \C^t_Bf\G) \leq 
 \meet_{\g\in\G}\C_B(f\g\land fd) =
f\big( \meet_{\g\in\G}\C_A(\g\land d)\big).
$$ 
This shows that $\meet_{\g\in\G}\C_A(\g\land d)\in D$. Hence $d=\meet D\leq \meet_{\g\in\G}\C_A(\g\land d)$, proving that $d$ is a post-fixed point for $\G$. Thus $d\leq \C^t_A\G$ by  Lemma \ref{lem:defCt}. Then by \eqref{ineqfd} and monotonicity of $f$,
$$
\C^t_Bf\G\leq fd\leq f\C^t_A\G,
$$
completing the proof that $\C^t_Bf\G\leq f\C^t_A\G$ and hence $\C^t_Bf\G= f\C^t_A\G$.\footnote{This proof can be adapted to yield the following  result. If $\a:A\to A$ and $\b:B\to B$ are monotonic functions on complete lattices $A$ and $B$, and $f:A\to B$ is  a complete lattice homomorphism
such that $f\circ\alpha=\beta\circ f$, then $f$ preserves the greatest and least fixed points of $\a$ and $\b$.}
\end{proof}

We will say that
$(A,\C_A^t)$ is a \emph{subalgebra} of $(B,\C_B^t)$ if $A$ is a Boolean subalgebra of $B$ that is closed under $\C^t_B$, i.e.\ $\C^t_B\G\in A$ for all $\G\in\c P_{fin}A$, and $\C_A^t$ is the restriction of $\C_B^t$ to $A$. Equivalently this means that $A\sub B$ and the inclusion $A\hookrightarrow B$ is  a homomorphism $(A,\C_A^t)\to(B,\C_B^t)$ as above. 
This implies that the reduct $(A,\C_A)$ is a subalgebra of $(B,\C_B)$. But we will  see in Section \ref{sec:nocomp} that it is possible to have 
$(A,\C_A)$  a subalgebra of $(B,\C_B)$ while $(A,\C_A^t)$ is not a subalgebra of $(B,\C_B^t)$.
 
 We also need the notion of the   \emph{relativisation} of an algebra to one of its elements. This abstracts from  the notion of a topological subspace, i.e.\ the relativisation of a topology to a subset.
To describe it, let $(A,\C_A^t)$ be an abstract  tangled closure algebra with closure algebra reduct $(A,\C_A)$. If $\a\in A$, let   $A_\a=\{b\in A:b\leq\a\}$ be the Boolean algebra of elements below $\a$, in which joins and meets are the same as in $A$, and the complement of $b$ in $A_\a$ is $\a-b=\a\land -b$. The implication operation $\imp_\a$ of $A_\a$ has 
$$
b\imp_\a c=(\a-b)\lor c\leq -b\lor c = b\imp c.
$$ 
A closure operator $\C_{\a}$ is 
defined on $A_\a$ by putting  $\C_{\a}b: =\a\land\C_A b$. The dual operator $\I_\a$ to $\C_\a$ has the property that if $\a$ is an open element of $A$, i.e.\ $\I_A\a=\a$, then $\I_\a b=\I_A b$ for all $b\in A_\a$ \cite[p.~96]{rasi:math63}.
Define an operation $\C_\a^t$ on $\c P_{fin} A_\a$ by putting  $\C_\a^t\G:=\a\land\C_A^t\G$.  $(A_\a,\C_\a^t)$ is the \emph{relativisation of  $(A,\C_A^t)$  to $\a$}.

\begin{theorem}  \label{openreduct}
 If $\a$ is open, then $(A_\a,\C_\a^t)$ is a tangled closure algebra with closure algebra reduct $(A_\a,\C_{\a})$.
\end{theorem}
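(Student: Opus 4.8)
The plan is to verify the three equational requirements on $(A_\a,\C_\a^t)$ directly: that $\C_\a$ (the induced unary operation) is a closure operator on $A_\a$, that \textbf{Fix} holds, and that \textbf{Ind} holds, while keeping track of the fact that relativisation interacts well with the Boolean operations and with $\C_A$, and (crucially, using $\a$ open) with $\I_A$. First I would record the basic facts already noted in the excerpt: joins and meets in $A_\a$ agree with those in $A$; $\C_\a b=\a\land\C_A b$; and since $\a$ is open, $\I_\a b=\I_A b$ for $b\in A_\a$ (by \cite[p.~96]{rasi:math63}). I would also note that the induced unary operator of $\C_\a^t$ is indeed $\C_\a$, since $\C_\a^t\{b\}=\a\land\C_A^t\{b\}=\a\land\C_A b=\C_\a b$. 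That $\C_\a$ is a closure operator on $A_\a$ is classical (McKinsey--Tarski / \cite[p.~96]{rasi:math63}), so the closure-algebra-reduct clause is immediate; I would simply cite this and move on.

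For \textbf{Fix}, fix $\G\in\c P_{fin}A_\a$. By \textbf{Fix} in $(A,\C_A^t)$ we have $\C_A^t\G\leq\meet_{\g\in\G}\C_A(\g\land\C_A^t\G)$. Meeting both sides with $\a$ and pushing $\a$ inside the meet gives $\a\land\C_A^t\G\leq\meet_{\g\in\G}\bigl(\a\land\C_A(\g\land\C_A^t\G)\bigr)$. Now the key point is that $\g\land\C_\a^t\G=\g\land\a\land\C_A^t\G=\g\land\C_A^t\G$ because $\g\leq\a$, so $\a\land\C_A(\g\land\C_A^t\G)=\C_\a(\g\land\C_\a^t\G)$, and the left side is $\C_\a^t\G$ by definition. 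Hence $\C_\a^t\G\leq\meet_{\g\in\G}\C_\a(\g\land\C_\a^t\G)$, which is \textbf{Fix} in $A_\a$.

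For \textbf{Ind}, fix $\G\in\c P_{fin}A_\a$ and $a\in A_\a$. We must show $\I_\a\bigl(a\imp_\a\meet_{\g\in\G}\C_\a(\g\land a)\bigr)\land a\leq\C_\a^t\G=\a\land\C_A^t\G$. Since $\C_\a(\g\land a)=\a\land\C_A(\g\land a)$ and $a\leq\a$, one checks that the Boolean implication inside differs between $A_\a$ and $A$ only on the part above $\a$, but because we then conjoin with $a\leq\a$ this discrepancy is harmless; more carefully, I would show $a\imp_\a\meet_{\g\in\G}\C_\a(\g\land a)\geq a\imp_A\meet_{\g\in\G}\C_A(\g\land a)$ restricted appropriately, or instead argue that $a\land\bigl(a\imp_\a\meet_{\g\in\G}\C_\a(\g\land a)\bigr)=a\land\meet_{\g\in\G}\C_A(\g\land a)=a\land\bigl(a\imp_A\meet_{\g\in\G}\C_A(\g\land a)\bigr)$. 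Then, using $\I_\a=\I_A$ on $A_\a$, and the fact that for open $\a$ the element $a\imp_\a(\cdots)$ lies in $A_\a$ so its $\I_\a$ equals $\I_A$, I can bound $\I_\a\bigl(a\imp_\a\meet_{\g\in\G}\C_\a(\g\land a)\bigr)\land a$ by $\I_A\bigl(a\imp_A\meet_{\g\in\G}\C_A(\g\land a)\bigr)\land a$ (using monotonicity of $\I_A$ together with the implication inequality $b\imp_\a c\leq b\imp c$ already recorded, applied inside $\I$), and then \textbf{Ind} in $(A,\C_A^t)$ gives this is $\leq\C_A^t\G$; since it is also $\leq a\leq\a$, it is $\leq\a\land\C_A^t\G=\C_\a^t\G$.

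The main obstacle is the third step: the Boolean implication operation genuinely changes under relativisation ($b\imp_\a c=(\a-b)\lor c$, not $-b\lor c$), so one must be careful that applying the interior operator $\I_\a$ and the hypothesis \textbf{Ind} for $A$ still goes through. The resolution is exactly the two facts the excerpt has set up for this purpose — that $\a$ is open, so $\I_\a$ agrees with $\I_A$ on $A_\a$, and that $b\imp_\a c\leq b\imp c$ — combined with the observation that everything in sight is conjoined with $a\leq\a$, which neutralises the difference. Once those bookkeeping points are handled, Fix and Ind for $A_\a$ drop straight out of Fix and Ind for $A$.
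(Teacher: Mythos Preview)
Your proposal is correct and follows essentially the same route as the paper: verify that the induced unary operation is $\C_\a$, derive \textbf{Fix} by meeting the ambient \textbf{Fix} with $\a$ and using $\g\leq\a$, and derive \textbf{Ind} by using $\I_\a=\I_A$ on $A_\a$ (openness of $\a$) together with $b\imp_\a c\leq b\imp c$ and monotonicity of $\I_A$ to reduce to \textbf{Ind} in $A$, then meet with $\a$. The paper's write-up is simply a cleaner version of your final argument for \textbf{Ind}, without the preliminary detours you sketch.
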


\begin{proof}
 $\C_\a^t$ induces the unary operation $ b\mapsto\a\land\C_A b$, which is the closure operator $\C_\a$ above. $\C_\a^t$ satisfies Fix, since for all finite $\G\sub A_\a$ and all $\g\in\G$,  using Fix for $\C_A^t$ shows that 
$$
\C_\a^t\G=\a\land\C_A^t\G\leq \a\land \C_A(\g\land\C_A^t\G)
= \a\land \C_A(\g\land\a\land \C_A^t\G)
= \a\land \C_A(\g\land\C_\a^t\G)
=\C_\a(\g\land\C_\a^t\G).
$$
To show $\C_\a^t$ satisfies Ind, we need the assumption that $\a$ is open,  implying that $\I_\a$ is the restriction of $\I_A$ to $A_\a$. Let $x=\I_\a(b\imp_\a \meet_{\g\in\G}\C_\a(\g\land b))\land b$ where $b\leq \a$. Then 
\begin{align*}
x&= \I_A(b\imp_\a \meet_{\g\in\G}\C_\a(\g\land b))\land b
\\
&\leq \I_A(b\imp \meet_{\g\in\G}\C_A(\g\land b))\land b
\\
&\leq \C_A^t\G
\end{align*}
by Ind for $\C_A^t$. Hence $x\leq\a\land\C_A^t\G = \C_\a^t\G$, which gives Ind for $\C_\a^t$.
\end{proof}

A \emph{free} tangled closure algebra over any set $V$ can be constructed by using a propositional modal logic and the standard Lindenbaum-Tarski algebra construction. To outline this, take an arbitrary $V$ and regard its members as (propositional) variables that can range over the elements of an algebra. From these variables we construct \emph{formulas} $\ph,\psi,\dots$ using
\begin{itemize}
\item
the Boolean connectives  $\land,\ \lor,\ \neg,\ \to,\ \leftrightarrow$, and a constant $\bot$, interpreted as the corresponding  operations in a Boolean algebra;
\item 
unary modalities $\di$ and $\bo$ interpreted as $\C$ and $\I$;
\item
a new connective $\dit$, interpreted as $\C^t$, which provides  formation of a formula  $\dit\G$ for each finite non-empty set $\G$ of formulas. 
\end{itemize}
We denote by S4$t$ be the propositional logic obtained by adding to a suitable axiomatisation of the (non-modal) two-valued propositional calculus the axiom schemes
\begin{description}
\item[K:]
$\Box(\ph\to\psi)\to(\Box\ph\to\Box\psi)$
\item[T:]
$\ph\to\di\ph$
\item[4:]
 $\di\di\ph\to\di\ph$
\item[Fix:]
$\dit\G\to  \di(\g\land\dit\G)$, \quad all $\g\in\G$,
\item[Ind:]
$\Box(\ph\to \bigwedge_{\g\in\G}\di(\g\land\ph))\to(\ph\to\dit\G)$,
\end{description}
and the inference rule of  $\bo$-generalisation (from $\ph$ infer $\bo\ph$). We write S4$t\vdash\ph$ to mean that formula $\ph$ is derivable as a theorem of this logic, which is studied in detail in \cite{gold:spat14,gold:fini16,gold:spat16}.

An equivalence relation $\equiv$ on formulas is defined by putting $\ph\equiv\psi$ iff S4$t\vdash\ph\leftrightarrow\psi$. If 
$\ab{\ph}=\{\psi:\ph\equiv\psi\}$ is the equivalence class of $\ph$, then the Lindenbaum-Tarski algebra of S4$t$ is the set
$A_{t}=\{\ab{\ph}: \ph \text{ is a formula}\} $ of all equivalence classes, with the operations
\begin{align*}
\ab{\ph}\land \ab{\psi} &=\ab{\ph\land\psi} \\
\ab{\ph}\lor \ab{\psi} &=\ab{\ph\lor\psi}\\
-\ab{\ph} &=\ab{\neg\ph}\\
0 &=\ab{\bot}\\
1&=\ab{\neg\bot}\\
\C^t_{A_t}\{\ab{\ph}:\ph\in\G\}  &=\ab{\dit\G}.
\end{align*}
$(A_t,\C^t_{A_t})$ is a well-defined tangled closure algebra having an injective function $\eta:V\to A_t$ given by $\eta(v)=\ab{v}$. This is for the most part standard theory \cite[\S\S VI.10, XI.7]{rasi:math63}. That $\C^t_{A_t}$ is well-defined follows because if  $\G'=\{\ph':\ph\in\G\}$ and S4$t\vdash\ph\leftrightarrow\ph'$ for all $\ph\in\G$, then
S4$t\vdash\dit\G\leftrightarrow\dit\G'$. The axioms \textbf{Fix} and \textbf{Ind} for S4$t$ ensure that $\C^t_{A_t}$ is a tangled closure operator.

The image $\{\ab{v}:v\in V\}$ of $\eta$ generates the algebra $(A_t,\C^t_{A_t})$, which is free over $V$ in the sense that for any tangled closure algebra $(A,\C_A^t)$ and any function $f:V\to A$, there is a unique tangled closure algebra homomorphism 
$f':(A_t,\C^t_{A_t})\to(A,\C^t_{A})$ such that $f'\circ\eta=f$. The function $f$ itself is extended to map all formulas into $A$ by interpreting the connectives by the corresponding operations of $(A,\C^t_A)$, and then $f'$ is defined by putting 
$f'\ab{\ph}=f(\ph)$. Identifying $v$ with $\eta(v)$ allows us to  view $V$ as a subset of $A_t$ that freely generates
$(A_t,\C^t_{A_t})$.

\begin{remark}\em  \label{infsig}
A tangled closure algebra differs from the type of algebra conventionally studied in universal algebra, since the operation $\C^t$ is not finitary, i.e.\ not $n$-ary for any $n<\omega$. But it  gives rise to the sequence of finitary operations $\{\C^t_n:n\geq 1\}$, where  $\C^t_n$ is the $n$-ary operation defined by $\C^t_n(\vec{a}{n})=\C^t\{\vec{a}{n}\}$. We could  define a tangled closure algebra as a conventional algebra with infinite signature, having the form $(A,\{\C^t_n:n\geq 1\})$, satisfying axioms Fix$_n$ and Ind$_n$ stated in terms of $\C^t_n$ for each $n\geq 1$, and satisfying axioms 
$
\C^t_n(\vec{a}{n})=\C^t_n(a_{\sigma 1},\dots,a_{\sigma n})
$
expressing the invariance of $\C^t_n$ under any permutation of its arguments. It is evident that this alternative approach is equivalent to the presentation we have given here. But it helps clarify that the class of tangled closure algebras is an equational class, or variety, in the traditional sense. 
\qed
\end{remark}
The logic S4$t$ has the finite model property: any non-theorem of the logic is falsifiable in the powerset algebra 
$(A_S,\C^t_R)$ of some finite quasi-ordered set $(S,R)$ (see \cite{fern:tang11,gold:spat14,gold:fini16} for a proof). From this it can be concluded that the variety of tangled closure algebras is generated by its finite members.

\section{Dissectable Algebras}  \label{sec:dissect}

A closure algebra $(B,\C_B)$ is \emph{dissectable} if for any non-zero open element $\a$ of $B$, and any natural numbers  $r$ and $s$,  there exist non-zero elements $\vec{\a}{r},\Vec{\b}{s}$ of $B$ such that
\begin{itemize}
\item 
these elements form a partition of $\a$, i.e.\ they are pairwise disjoint (any two have meet 0) and the join of all of them is $\a$;\footnote{When $r=0$, the sequence $\vec{\a}{r}$ is empty.}
\item
$\vec{\a}{r}$ are all open; \en and
\item
for all $i\leq r$ and $j\leq s$, \en 
$
\C_B\a_i-\a_i=\C_B\beta_j=\C_B\a-(\a_1\lor\cdots\lor\a_r).
$
\end{itemize}
Originally Tarski formulated the dissectability property with $s=0$, and proved that this holds for the powerset algebra of the real line and of its  dense-in-themselves subspaces. Density-in-itself means that there are no isolated points, i.e.\ no open singletons. Samuel Eilenberg then proved that the property holds for any separable dense-in-itself metric space, and this was presented in \cite{tars:auss38}. The more general formulation with arbitrary finite $s$ was given in \cite{mcki:alge44}, where it was shown to hold for  separable dense-in-themselves metric spaces. Another proof was given in \cite{rasi:math63} that eliminated the separability restriction. New kinds of dissectability theorems along these lines are presented in \cite{gold:spat14,gold:spat16,gold:tang16}.

It was shown in \cite{mcki:alge44} that if $(B,\C_B)$ is dissectable then every finite closure algebra is isomorphic to a subalgebra of the relativised algebra $(B_\a,\C_\a)$ for some non-zero open element $\a$ of $B$, and that any such $(B_\a,\C_\a)$ is itself dissectable. Moreover,  a \emph{well-connected} finite closure algebra is embeddable into $(B_\a,\C_\a)$ for \emph{every} non-zero open $\a$.
Well-connectedness means that $\C a\land\C b=0$ implies $a=0$ or $b=0$. Equivalently, it means that the meet of any two non-zero closed elements is non-zero. In a finite closure algebra, this means that there is a \emph{least} non-zero closed element, a property called \emph{strong compactness} in \cite[p.~110]{rasi:math63}. For the powerset closure algebra $(A_S,\C_R)$ of a quasi-order set $(S,R)$, as in Example \ref{ex:quoset}, this means that the quasi-order is \emph{point-generated} in the sense that there is a point $x\in S$ such that $R(x)=S$, so that every $y\in S$ has $xRy$. To see why, let $a$ be a least non-empty closed subset of $S$ in the Alexandroff topology. Take any $x\in a$. Then for any $y\in S$, the set $\{z:zRy\}$ is closed and non-empty, so includes $a$, showing that $xRy$. Hence $R(x)=S$. Conversely, if $R(x)=S$, then the set $\{z:zRx\}$ is a non-empty closed set included in all others. In summary: if $S$ is finite, then $(A_S,\C_R)$ is well-connected iff $(S,R)$
 is point-generated.
 
 Using our result from the previous section on homomorphisms with finite domains, we can readily lift the McKinsey-Tarski analysis  to tangled closure algebras.

\begin{theorem}  \label{thm:pointembed}
Let $(B,\C_B^t)$ be a tangled closure algebra whose closure algebra reduct $(B,\C_B)$ is dissectable. Then any finite tangled closure algebra with a well-connected closure algebra reduct is isomorphically embeddable into the relativised algebra $(B_\a,\C_\a^t)$ of any non-zero open element $\a$ of $(B,\C_B)$.
\end{theorem}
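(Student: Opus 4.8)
The plan is to reduce the statement to two results already in hand: the McKinsey--Tarski embedding theorem for closure algebras, recalled just before the theorem, and Theorem \ref{findom} on closure-algebra homomorphisms with finite domain. The point is that once the closure-algebra part of the embedding has been produced, preservation of $\C^t$ comes for free, so there is essentially nothing new to prove about the tangle operation itself.

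First I would observe that, since $\a$ is a non-zero open element of $(B,\C_B)$, Theorem \ref{openreduct} guarantees that the relativisation $(B_\a,\C_\a^t)$ is a tangled closure algebra whose closure algebra reduct is exactly $(B_\a,\C_\a)$. Next, because $(B,\C_B)$ is dissectable and the closure algebra reduct $(A,\C_A)$ of the given finite tangled closure algebra is well-connected (equivalently, strongly compact, as $A$ is finite), the McKinsey--Tarski result supplies an isomorphic embedding $f$ of the closure algebra $(A,\C_A)$ into $(B_\a,\C_\a)$ --- and this holds for \emph{every} non-zero open $\a$, which is where the ``any non-zero open $\a$'' of the statement comes from. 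Then I would apply Theorem \ref{findom} with $(B_\a,\C_\a^t)$ in the role of the codomain tangled closure algebra: $f\colon A\to B_\a$ is a closure algebra homomorphism between the reducts $(A,\C_A)$ and $(B_\a,\C_\a)$, and $A$ is finite, so Theorem \ref{findom} yields $f(\C_A^t\G)=\C_\a^t\{f\g:\g\in\G\}$ for all $\G\in\c P_{fin}A$. Thus $f$ is a homomorphism $(A,\C_A^t)\to(B_\a,\C_\a^t)$, and being injective (as a Boolean embedding) it is the required isomorphic embedding of tangled closure algebras.

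I do not expect a genuine obstacle here: no fixed-point bookkeeping needs to be redone, since Theorem \ref{findom} --- via Lemma \ref{lem:defCt} --- already encapsulates the fact that a closure-preserving map out of a finite tangled closure algebra automatically preserves greatest fixed points, hence $\C^t$. The one point to be careful about, which I would flag as the crux of the (short) argument, is that Theorem \ref{findom} requires the codomain to be a bona fide tangled closure algebra rather than merely a closure algebra; this is exactly what Theorem \ref{openreduct} provides, and it is the reason the hypothesis that $\a$ be open cannot be dropped.
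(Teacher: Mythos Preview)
Your proposal is correct and follows essentially the same approach as the paper: invoke Theorem~\ref{openreduct} to ensure $(B_\a,\C_\a^t)$ is a tangled closure algebra, apply the McKinsey--Tarski result \cite[Theorem~3.7]{mcki:alge44} to obtain a closure algebra embedding $f:(A,\C_A)\to(B_\a,\C_\a)$, and then use Theorem~\ref{findom} to upgrade $f$ to a tangled closure embedding. Your closing remark about why openness of $\a$ matters (so that Theorem~\ref{openreduct} applies and the codomain is a genuine tangled closure algebra) is a nice observation not made explicit in the paper.
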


\begin{proof}
Let be any non-zero open element  of $(B,\C_B)$.  By Theorem \ref{openreduct} $(B_\a,\C_\a^t)$ is a tangled closure algebra, with closure algebra reduct $(B_\a,\C_\a)$.

Now let $(A,\C^t_A)$ be a finite tangled closure algebra whose closure algebra reduct $(A,\C_A)$ is well-connected. Then
by  \cite[Theorem 3.7]{mcki:alge44} there is a closure algebra embedding $f:(A,\C_A)\to (B_\a,\C_\a)$. By our Theorem \ref{findom} this $f$ preserves the tangled closure operations $\C_A^t$ and $\C_\a^t$, so provides the result.
\end{proof} 

Note that by putting $\a=1$ in this Theorem, so that $B_\a=B$, we conclude that any well-connected finite tangled closure algebra is isomorphic to a subalgebra of $(B,\C^t_B)$ itself.
We can now apply this result to show that any finite tangled closure algebra has \emph{some} embedding into a relativised algebra of any dissectable tangled closure algebra.

\begin{theorem}
Let $(B,\C_B^t)$ be a dissectable tangled closure algebra. Then any finite tangled closure algebra  is isomorphically embeddable into the relativised algebra $(B_\a,\C_\a^t)$ of some open element $\a$ of $(B,\C_B)$.
\end{theorem}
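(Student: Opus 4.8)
The plan is to reduce to the well-connected case already settled in Theorem~\ref{thm:pointembed}, by realising the given finite algebra as a \emph{relativisation} of a point-generated one. First, by Theorem~\ref{thm:repfinite} we may assume the given finite tangled closure algebra is the powerset algebra $(A_S,\C^t_R)$ of some finite quasi-ordered set $(S,R)$, as in Example~\ref{ex:quoset}. Adjoin a new point $x_0\notin S$ and put $S'=S\cup\{x_0\}$ and $R'=R\cup(\{x_0\}\times S')$; one checks readily that $R'$ is a quasi-order on $S'$ with $R'(x_0)=S'$, so $(S',R')$ is point-generated. As recorded in the discussion before Theorem~\ref{thm:pointembed}, a finite point-generated quasi-order has a well-connected powerset closure algebra, so $(A_{S'},\C^t_{R'})$ is a finite tangled closure algebra whose closure algebra reduct is well-connected.

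Next I would identify $(A_S,\C^t_R)$ with a relativisation of $(A_{S'},\C^t_{R'})$. Since $R'(y)=R(y)\sub S$ for every $y\in S$, the set $S$ is an up-set of $(S',R')$, hence an open element of $(A_{S'},\C_{R'})$. By Theorem~\ref{openreduct} the relativisation $\big((A_{S'})_S,\C^t_S\big)$ is a tangled closure algebra, and by that theorem its closure algebra reduct is $\big((A_{S'})_S,\C_S\big)$ with $\C_S b=S\cap\C_{R'}b$; a one-line computation gives $S\cap\C_{R'}b=\C_R b$ for every $b\sub S$. Thus $\big((A_{S'})_S,\C^t_S\big)$ and $(A_S,\C^t_R)$ are tangled closure algebras with the same Boolean reduct $\c P(S)$ and the same induced closure operator $\C_R$; since Lemma~\ref{lem:defCt} shows $\C^t$ is determined by the closure operator it induces, these two algebras coincide, i.e.\ $\C^t_S=\C^t_R$.

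Finally I would apply Theorem~\ref{thm:pointembed} with $\a=1$ (as in the remark following it) to obtain a tangled closure algebra embedding $g:(A_{S'},\C^t_{R'})\to(B,\C^t_B)$. As a closure algebra homomorphism $g$ preserves the interior operation, so $\a:=g(S)$ is an open element of $(B,\C_B)$. Restricting $g$ to the elements of $A_{S'}$ below $S$ yields an injective Boolean homomorphism into $B_\a$, and for every $\G\in\c P_{fin}(A_S)$ we have $g(\C^t_S\G)=g(S\land\C^t_{R'}\G)=\a\land\C^t_B(g\G)=\C^t_\a(g\G)$, so this restriction is an embedding of $(A_S,\C^t_R)=\big((A_{S'})_S,\C^t_S\big)$ into $(B_\a,\C^t_\a)$, as required.

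The only real subtlety is the middle step: one must check that the relativisation of the enlarged algebra to the open element $S$ gives back exactly the original $(A_S,\C^t_R)$, and the cleanest route is via the uniqueness of the tangled closure operator induced by a given closure operator (Lemma~\ref{lem:defCt}), rather than unwinding the greatest-fixed-point or endless-path descriptions directly. Everything else — that $R'$ is a quasi-order, that $(S',R')$ is point-generated, that $S$ is open in $(A_{S'},\C_{R'})$, and that $g$ preserves interiors and restricts to an embedding of the relativised algebras — is routine.
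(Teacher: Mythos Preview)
Your proof is correct and follows the same overall strategy as the paper: reduce via Theorem~\ref{thm:repfinite} to a finite quasi-order, adjoin a root to make it point-generated, embed the enlarged algebra into $(B,\C_B^t)$ by Theorem~\ref{thm:pointembed}, and then relativise to the open image of $S$. The one genuine difference is in verifying that the relativisation of $(A_{S'},\C^t_{R'})$ to $S$ is $(A_S,\C^t_R)$: the paper argues this directly from the endless-path description of $\C^t_R$, whereas you invoke the uniqueness of $\C^t$ given $\C$ (the remark after Lemma~\ref{lem:defCt}) after checking that the relativised closure operator is $\C_R$ --- a cleaner route that avoids the combinatorics of paths.
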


\begin{proof}
By Theorem \ref{thm:repfinite} it suffices to give the proof for  finite algebras of the form $(A_S,\C^t_R)$. If $(S,R)$ is point-generated, the result follows from Theorem \ref{thm:pointembed}. Otherwise, we add a generating point. Let $x$ be any object not in $S$, put $S^*=S\cup\{x\}$, and let $R^*=R\cup(\{x\}\times S^*)$. Then $(S^*,R^*)$ is a quasi-ordered set point-generated by $x$, with no member of $S$ being $R^*$-related to $x$. The finite tangled closure algebra $(A_{S^*},\C^t_{R^*})$ is well-connected, so by Theorem \ref{thm:pointembed} with $\a=1$, there is a tangled closure embedding $h: (A_{S^*},\C^t_{R^*})\to (B,\C_B^t)$. The image 
$(B',\C^t_{B'})$ of $h$ is a tangled closure subalgebra of $(B,\C_B^t)$ isomorphic to $(A_{S^*},\C^t_{R^*})$, where 
$\C^t_{B'}$ is the restriction of $\C^t_{B}$ to $B'=h(B)$.

Now $S$ is a subset of $S^*$ that is closed upwards under $R^*$, so $S$ is an open element of $(A_{S^*},\C_{R^*})$, i.e.\ 
$\I_{R^*}(S)=S$. But $h$ preserves the interior operations $\I_{R^*}$ and $\I_B$, so then $h(S)$ is an open element of $(B, \C_B)$, i.e.\
$\I_{B}h(S)=h(S)$. Let $\a=h(S)\in B'$. Then as  $(A_{S^*},\C^t_{R^*})$ is isomorphic to $(B',\C^t_{B'})$ under $h$, the relativisation of  $(A_{S^*},\C^t_{R^*})$ to $S$ is isomorphic to the relativisation of $(B',\C^t_{B'})$ to $\a$, which is a subalgebra of the relativisation $(B_\a,\C_\a^t)$ of $(B,\C_B^t)$ to the open element $\a$.

But the relativisation of  $(A_{S^*},\C^t_{R^*})$ to $S$ is exactly  $(A_S,\C^t_R)$. For, the relativisation $(A_{S^*})_S$ of 
the powerset algebra $A_{S^*}$ of $S^*$ to $S$ is just the powerset algebra $A_{S}$ of $S$. Also, the relativisation of 
$\C^t_{R^*}$ to $S$ is the map $\G\mapsto  S\cap  \C^t_{R^*}\G$    for $\G\sub A_S$. But 
$S\cap  \C^t_{R^*}\G=\C^t_{R}\G$ because $S$ is closed upwards under $R^*$ and an endless $R^*$-path that starts in $S$ must remain in $S$ and be an endless $R$-path.

Altogether then, this shows that $(A_S,\C^t_R)$ is isomorphic to a subalgebra of $(B_\a,\C_\a^t)$.
\end{proof}

The proof of  Theorem \ref{thm:pointembed} can be extend to all finite tangled closure algebras if the dissectable algebra $(B,\C_B)$ is assumed to be \emph{totally disconnected}, which means that every non-zero open element is the join of two disjoint non-zero open elements. The totally disconnected dissectable algebras include the closure algebras of all dense-in-themselves metric spaces that are totally disconnected in the spatial sense that distinct points can be separated by a clopen set. Examples of such spaces include the rational line, the Cantor space and the Baire space $\omega^\omega$.

\begin{theorem}
Let $\a$ be any non-zero open element of  a tangled closure algebra $(B,\C_B^t)$ whose closure algebra reduct is totally disconnected and dissectable.
Then any finite tangled closure algebra  is isomorphically embeddable into the relativised algebra $(B_\a,\C_\a^t)$
\end{theorem}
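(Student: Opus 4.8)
The plan is to realise the given finite tangled closure algebra as a tangled closure subalgebra of a finite product of \emph{point-generated} (equivalently, well-connected) algebras, and then to place the factors of that product inside disjoint open pieces of $\a$ supplied by total disconnectedness, applying Theorem~\ref{thm:pointembed} piece by piece.

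First I would invoke Theorem~\ref{thm:repfinite} to assume the finite algebra is $(A_S,\C^t_R)$ for a finite quasi-ordered set $(S,R)$. Let $C_1,\dots,C_m$ be the minimal clusters of $(S,R)$, so $m\geq1$ with $m=1$ exactly when $(S,R)$ is point-generated. Choosing $x_i\in C_i$ and putting $S_i=R(x_i)$, each $S_i$ is an up-set of $(S,R)$ (if $y\in R(x_i)$ and $yRz$ then $x_iRyRz$, so $z\in R(x_i)$), and $(S_i, R\cap(S_i\times S_i))$ is point-generated by $x_i$. Form the disjoint union $\hat S=\bigsqcup_{i=1}^m S_i$ with $\hat R$ the union of the restricted relations; since an endless $\hat R$-path stays inside a single summand, $(A_{\hat S},\C^t_{\hat R})\cong\prod_{i=1}^m(A_{S_i},\C^t_{R_i})$ as tangled closure algebras. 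The map $\pi\colon\hat S\to S$ identifying each point of the $i$th copy with itself is onto (every point of $S$ lies above a minimal point, hence in some $S_i$) and is a bounded morphism. Hence $a\mapsto\pi^{-1}(a)$ is a Boolean embedding $A_S\hookrightarrow A_{\hat S}$ commuting with $\C_R$ and $\C_{\hat R}$, i.e.\ a closure-algebra embedding; and as $A_S$ is \emph{finite}, Theorem~\ref{findom} upgrades it to a tangled closure embedding $(A_S,\C^t_R)\hookrightarrow(A_{\hat S},\C^t_{\hat R})\cong\prod_i(A_{S_i},\C^t_{R_i})$, each factor $(A_{S_i},\C_{R_i})$ being well-connected.

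Next, using that $\a$ is non-zero and open, apply total disconnectedness $m-1$ times to obtain $\a=\gamma_1\lor\cdots\lor\gamma_m$ with the $\gamma_i$ pairwise disjoint, non-zero, and open in $B$. Since $\gamma_i$ is open, $-\gamma_i$ is closed, so $\C_B(b\land-\gamma_i)\land\gamma_i=0$ for all $b$, which gives $\C_\a b\land\gamma_i=\C_{\gamma_i}(b\land\gamma_i)$; thus the Boolean isomorphism $b\mapsto(b\land\gamma_i)_i$ is a closure-algebra isomorphism $(B_\a,\C_\a)\cong\prod_{i=1}^m(B_{\gamma_i},\C_{\gamma_i})$, hence (closure-algebra isomorphisms preserve $\C^t$, by \eqref{defCt}) a tangled closure isomorphism $(B_\a,\C^t_\a)\cong\prod_i(B_{\gamma_i},\C^t_{\gamma_i})$. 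For each $i$, the reduct $(B,\C_B)$ is dissectable, $\gamma_i$ is a non-zero open element, and $(A_{S_i},\C^t_{R_i})$ is finite with a well-connected reduct, so Theorem~\ref{thm:pointembed} yields a tangled closure embedding $e_i\colon(A_{S_i},\C^t_{R_i})\hookrightarrow(B_{\gamma_i},\C^t_{\gamma_i})$. Composing the embedding of the previous paragraph with $\prod_i e_i$ and the isomorphism just obtained produces a tangled closure embedding $(A_S,\C^t_R)\hookrightarrow(B_\a,\C^t_\a)$, which is the assertion.

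The step I expect to cost the most is producing the embedding of $(A_S,\C^t_R)$ into a product of point-generated factors. The naive device of adjoining a single $R$-least point makes the enlarged algebra well-connected, but thereafter it only embeds into a relativisation $(B_{\a'},\C^t_{\a'})$ for an open $\a'\leq\a$ over which one has no control, rather than into $(B_\a,\C^t_\a)$ itself; the remedy is to cover $(S,R)$ by the point-generated up-sets $R(x_i)$, take their disjoint union, and rely on Theorem~\ref{findom} to carry tangled closure along the resulting closure-algebra map, with total disconnectedness used precisely to match the $m$ factors to $m$ disjoint open pieces of $\a$. Care is also needed in checking that disjoint-union / partition decompositions genuinely respect $\C^t$ (this reduces, via the path description of Example~\ref{ex:quoset} and preservation of $\C^t$ by closure-algebra isomorphisms, to the fact that endless paths stay within a single summand), but this is routine.
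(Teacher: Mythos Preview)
Your argument is correct, but it is considerably more hands-on than the paper's. The paper simply observes that the relativised reduct $(B_\a,\C_\a)$ inherits total disconnectedness and dissectability, invokes McKinsey--Tarski \cite[Theorem~3.8]{mcki:alge44} directly to obtain a closure-algebra embedding of the finite reduct $(A,\C_A)$ into $(B_\a,\C_\a)$, and then applies Theorem~\ref{findom} once to upgrade that embedding to a tangled-closure embedding. By contrast, you effectively re-prove the McKinsey--Tarski step from Theorem~\ref{thm:pointembed}: you decompose the finite frame into its point-generated up-sets $R(x_i)$, embed $(A_S,\C^t_R)$ into their product via the surjective bounded morphism $\pi$, split $\a$ into $m$ disjoint non-zero open pieces using total disconnectedness, and then assemble the $m$ well-connected embeddings coordinatewise. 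Your route is self-contained (it does not rely on the non-well-connected case of McKinsey--Tarski's theorem) and makes explicit exactly where total disconnectedness is spent, at the cost of extra bookkeeping about products and bounded morphisms; the paper's route is shorter because it outsources that bookkeeping to the cited result and applies Theorem~\ref{findom} only once rather than in several places.
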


\begin{proof}
The reduct $(B_\a,\C_\a)$ is totally disconnected and dissectable, so by \cite[Theorem 3.8]{mcki:alge44}, if $(A,\C^t_A)$
is any finite tangled closure algebra, there is  a closure algebra embedding $f:(A,\C_A)\to (B_\a,\C_\a)$. By Theorem \ref{findom} this $f$ preserves the tangled closure operations $\C_A^t$ and $\C_\a^t$, so provides the result.
\end{proof}
 
\section{No Completion}  \label{sec:nocomp}

A completion of a Boolean algebra $A$ is any \emph{complete} Boolean algebra $B$ extending $A$, i.e.\ having $A$ as a subalgebra, such that each member of $B$ is the join of a set of members of $A$. This last condition is equivalent to $A$ being \emph{dense} in $B$ in the sense that each non-zero member of $B$ is above some non-zero member of $A$. It implies that $B$ is a \emph{regular} extension of $A$, i.e.\ the inclusion $A\hookrightarrow B$ preserves any  joins (hence meets) that exist in $A$, so that if $a=\join_A E$ in $A$, then $a=\join_B E$ in $B$. Any Boolean algebra $A$ has a completion, and any two completions of $A$ are isomorphic by a function that is the identity on $A$ (see e.g.\ \cite{siko:bool64,giva:intr09,dave:intr90}). This unique-up-to-isomorphism algebra is often called the \emph{MacNeille completion} of $A$, after its construction in \cite{macn:part37}. It has various abstract characterisations, some due to Banaschewski \cite{bana:hull56,bana:cate67}.

If $(A,\C_A)$ is a closure algebra and $B$ is any complete extension of $A$, then $\C_A$ can be extended to a closure operator on $B$ by putting
\begin{equation}  \label{complC}
\C_B b=\meet_B\{\C_A a : b\leq a\in A\}
\end{equation}
for all $b\in B$. This definition   was given in \cite{mcki:alge44} where it was applied to the Stone representation of $A$ to   lift $\C_A$ to the powerset algebra of the representing set, ultimately showing that any closure algebra is embeddable into the complete algebra of subsets of some topological space. It was later used in \cite{rasi:alge51} to extend  $\C_A$ to the MacNeille completion of $A$, applying this to construct a regular complete extension of any Heyting algebra, and then using the regularity to obtain completeness theorems in algebraic semantics for versions of intuitionistic logic and the modal logic S4 with first-order quantifiers. In more recent literature on MacNeille completions \cite{theu:macn07}, $\C_B$ as given by \eqref{complC} is called the \emph{upper MacNeille extension} of $\C_A$.

There is no unique definition of MacNeille extension for operations on Boolean algebras. Monk \cite{monk:comp70} showed that for algebras, such as cylindric algebras, in which the operations are completely additive (preserve all joins), it is fruitful to use the \emph{lower MacNeille extension} which lifts an operation $\mathbf{O}_A$ to the operation
$\mathbf{O}_B b=\join_B\{\mathbf{O} a : b\geq a\in A\}$.

We now define a \emph{completion} of a closure algebra $(A,\C_A)$ to be a closure algebra $(B,\C_B)$ such that $B$ is a Boolean completion of $A$, $(A,\C_A)$ is a subalgebra of $(B,\C_B)$, and \eqref{complC} holds for each $b\in B$.

 \begin{theorem} \label{thm:closurecomp}
Any  closure algebra $(A,\C_A)$ has a completion, and any two such completions are isomorphic by a function that is the identity on $A$.
\end{theorem}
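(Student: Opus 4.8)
The plan is to establish existence and uniqueness separately, in both cases leveraging the known theory of Boolean MacNeille completions together with the defining formula \eqref{complC}.

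\medskip\noindent\textbf{Existence.} Start from the Boolean MacNeille completion $B$ of $A$, which exists and is a regular dense extension. Define $\C_B$ on $B$ by \eqref{complC}. The work is to verify that $\C_B$ is a closure operator and that $(A,\C_A)$ is a subalgebra of $(B,\C_B)$, i.e.\ that $\C_B$ restricted to $A$ agrees with $\C_A$. The latter is immediate from monotonicity of $\C_A$: for $a\in A$, the meet in \eqref{complC} is attained at $a$ itself, giving $\C_B a=\C_A a$. For the closure axioms: $\C_B 0=0$ is clear since $0\in A$ and $\C_A 0=0$; $b\le\C_B b$ holds because $b\le a$ implies $b\le\C_A a$ for all the $a$ in question, so $b$ is a lower bound of the set whose meet is $\C_B b$; monotonicity of $\C_B$ is immediate from the definition (a smaller $b$ gives a larger indexing set, hence a smaller meet — wait, larger meet; in any case the monotonicity direction is routine). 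Additivity $\C_B(b\lor b')=\C_B b\lor\C_B b'$ and idempotence $\C_B\C_B b=\C_B b$ are the only slightly delicate points. For additivity, ``$\ge$'' is monotonicity; for ``$\le$'' one uses density of $A$ in $B$ to approximate: it suffices to show every $a\in A$ with $b\lor b'\le a$ has $\C_A a\ge\C_B b\lor\C_B b'$, and since $b\le a$ and $b'\le a$ we get $\C_B b\le\C_A a$ and $\C_B b'\le\C_A a$, done. For idempotence, ``$\ge$'' is inflationarity applied to $\C_B b$; for ``$\le$'', given $a\in A$ with $b\le a$, one has $\C_A a=\C_A\C_A a$ and $\C_B b\le\C_A a$, and $\C_A a\in A$, so $\C_B\C_B b\le\C_B(\C_A a)=\C_A\C_A a=\C_A a$; taking the meet over all such $a$ gives $\C_B\C_B b\le\C_B b$. (I expect this is essentially the argument already in \cite{rasi:alge51} or \cite{theu:macn07}.)

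\medskip\noindent\textbf{Uniqueness.} Suppose $(B,\C_B)$ and $(B',\C_{B'})$ are both completions of $(A,\C_A)$. By uniqueness of the Boolean MacNeille completion, there is a Boolean isomorphism $g:B\to B'$ that is the identity on $A$. It remains to check $g$ is a closure-algebra isomorphism, i.e.\ $g(\C_B b)=\C_{B'}(gb)$ for all $b\in B$. This is forced by \eqref{complC}: since $g$ fixes $A$ pointwise and is a complete lattice isomorphism (Boolean isomorphisms between complete Boolean algebras preserve all meets and joins), we get
$$
g(\C_B b)=g\Bigl(\meet\nolimits_B\{\C_A a:b\le a\in A\}\Bigr)=\meet\nolimits_{B'}\{\C_A a:gb\le a\in A\}=\C_{B'}(gb),
$$
using that $b\le a\iff gb\le g a=a$ for $a\in A$.

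\medskip\noindent\textbf{Main obstacle.} The only non-routine points are the additivity and idempotence verifications for $\C_B$ in the existence half; everything else follows formally from density/regularity of the Boolean completion and the shape of \eqref{complC}. These should already be available in \cite{rasi:alge51,theu:macn07}, so the proof in the paper is likely to be brief, citing those sources and checking that $\C_B\restriction A=\C_A$ and that \eqref{complC} is respected by the canonical Boolean isomorphism in the uniqueness clause.
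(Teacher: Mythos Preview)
Your approach matches the paper's exactly: existence via the Boolean MacNeille completion with $\C_B$ defined by \eqref{complC}, and uniqueness by transporting the canonical Boolean isomorphism through \eqref{complC}. The paper does not verify the closure-operator axioms for $\C_B$ at all; it simply asserts that \eqref{complC} defines a closure operator (the verification being implicit in the earlier references to \cite{mcki:alge44} and \cite{rasi:alge51}), checks $\C_B a=\C_A a$ for $a\in A$, and then gives the uniqueness calculation essentially as you do. So your prediction of a brief proof deferring to those sources is accurate.

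One concrete error in your sketch, however: your argument for the ``$\le$'' half of additivity actually re-proves ``$\ge$''. Showing that $\C_A a\ge\C_B b\lor\C_B b'$ for every $a\in A$ with $b\lor b'\le a$ yields only that $\C_B b\lor\C_B b'$ is a lower bound of $\{\C_A a: b\lor b'\le a\in A\}$, hence $\C_B(b\lor b')=\meet_a\C_A a\ge\C_B b\lor\C_B b'$, which is the monotonicity direction again. The genuine ``$\le$'' direction needs a different move: for $a\ge b$ and $a'\ge b'$ in $A$ one has $\C_B(b\lor b')\le\C_A(a\lor a')=\C_A a\lor\C_A a'$; residuating gives $\C_B(b\lor b')\land{-}\C_A a'\le\C_A a$ for all such $a$, hence $\le\C_B b$, so $\C_B(b\lor b')\le\C_B b\lor\C_A a'$; repeating with $a'$ then gives $\C_B(b\lor b')\le\C_B b\lor\C_B b'$. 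Your idempotence argument is correct as written.
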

\begin{proof}
Let $B$ be a Boolean completion of $A$, and \emph{define} a closure operator $\C_B$ on $B$ by \eqref{complC}. Then $\C_B a=\C_A a$ for $a\in A$, so $(A,\C_A)$ is a subalgebra of $(B,\C_B)$ and $(B,\C_B)$ is a completion of $(A,\C_A)$. If $(B',\C_{B'})$ is another one, then there is a Boolean isomorphism $f:B\to B'$ that is the identity on $A$. Hence $f$ preserves joins and meets, and for any $b\in B$ and $a\in A$, we have $b\leq a$ iff $f(b)\leq a$. Then we can shown that $f$ preserves closure operators as follows.

\begin{tabular}{ll} 
$f(\C_B b)$ &
\\
$=f\meet_B\{\C_A a : b\leq a\in A\}$ &by \eqref{complC},
\\
$=\meet_{B'}\{f(\C_A a) : b\leq a\in A\}$    &as $f$ preserves meets,
\\
$=\meet_{B'}\{\C_A a : f(b)\leq a\in A\}$     &as $f$ fixes $A$
\\
$= \C_{B'} f(b)$  &by \eqref{complC} for $B'$.
\end{tabular}

\noindent
Thus $f$ is a closure algebra isomorphism.
\end{proof}

It would thus seem natural to define a completion of a tangled closure algebra $(A,\C_A^t)$ to be a tangled closure algebra $(B,\C_B^t)$ such that
\begin{enumerate}[(i)]
\item 
$(A,\C_A^t)$ is a subalgebra of $(B,\C_B^t)$,
\item
the closure algebra $(B,\C_B)$ induced by $\C^t_B$ is a completion of the
 closure algebra $(A,\C_A)$ induced by $\C^t_A$, 
\end{enumerate}
and perhaps some other conditions as well. However, we will now construct a tangled closure algebra $(A,\C_A^t)$ for which there is no \emph{complete} tangled closure algebra $(B,\C_B^t)$ satisfying (i), let alone (i) and (ii).

 \begin{lemma}  \label{existsA0}
There exists a tangled closure algebra $(A_0,\C^t)$ having a subset $\{p_n:n<\omega\}\cup\{q\}$ and an ultrafilter $x_0$ such that $\C^t\{q,-q\}\notin x_0$ while $\Sigma\sub x_0$, where
 $$
 \Sigma=\{p_0\}\cup\{\I(p_{2n}\Rightarrow \C(p_{2n+1}\land q)),\I(p_{2n+1}\Rightarrow\C(p_{2n+2}\land -q)):n<\omega\}.
 $$
 \end{lemma}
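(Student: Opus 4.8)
The plan is to realise $\Sigma$ inside the powerset tangled closure algebra of a carefully chosen quasi-ordered set, and then take $A_0$ to be the (countable) tangled closure subalgebra that $\{p_n:n<\omega\}\cup\{q\}$ generates there. Specifically, I would let $S=\bigcup_{k\geq 1}L_k$ be a disjoint union of finite chains, where $L_k=\{z^k_0,z^k_1,\dots,z^k_{2k+1}\}$ has $2k+2$ elements, ordered by $z^k_iRz^k_j\iff i\leq j$ inside each $L_k$ and with no $R$-edges between distinct chains; this $R$ is reflexive and transitive. By Example~\ref{ex:quoset}, $(A_S,\C^t_R)$ is then a tangled closure algebra. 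Inside it I would put $p_n=\{z^k_n:k\geq 1,\ n\leq 2k+1\}$, so that $p_i\cap L_k=\{z^k_i\}$ when $i\leq 2k+1$ and $p_i\cap L_k=\emptyset$ otherwise, and let $q$ be the set of all $z^k_i$ with $i$ odd; so along each chain the points are labelled $p_0,p_1,p_2,\dots$ in order, with $q$-membership alternating. Let $(A_0,\C^t)$ be the tangled closure subalgebra of $(A_S,\C^t_R)$ generated by $\{p_n:n<\omega\}\cup\{q\}$; it is a tangled closure algebra, being a subalgebra of one, and every member of $\Sigma$ lies in it, being a combination of the $p_n$, $q$ and the Boolean, closure and interior operations of $A_0$.

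It then suffices to verify two facts. First, $\C^t\{q,-q\}=0$ in $A_0$: since $A_0$ is a subalgebra, $\C^t_{A_0}\{q,-q\}=\C^t_R\{q,-q\}$, and by the endless-path description of Example~\ref{ex:quoset} a point of $\C^t_R\{q,-q\}$ would start an endless $R$-path meeting both $q$ and $-q$ infinitely often; but every endless $R$-path remains in a single chain $L_k$ and is non-decreasing in its index, hence eventually constant, so it meets at most one of $q,-q$ infinitely often. Thus $\C^t_R\{q,-q\}=\emptyset$. Second, $\Sigma$ has the finite intersection property (as a family of subsets of $S$); granting this, the filter of $A_0$ generated by $\Sigma$ is proper and so extends to an ultrafilter $x_0$ of $A_0$, whence $\Sigma\subseteq x_0$ and, since $0\notin x_0$, also $\C^t\{q,-q\}=0\notin x_0$, as wanted.

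The substance is the calculation underlying the finite intersection property: for every $k\geq 1$, the bottom point $z^k_0$ belongs to \emph{every} member of $\Sigma$ except $\psi_k:=\I(p_{2k+1}\Rightarrow\C(p_{2k+2}\land -q))$. Since $R(z^k_0)=L_k$, one has $z^k_0\in\I(a)\iff L_k\subseteq a$, and for $a=(p_m\Rightarrow\C(p_{m'}\land c))$ with $c\in\{q,-q\}$, unwinding the Alexandroff operations on $L_k$ reduces this to requiring that the $p_m$-point of $L_k$ — namely $z^k_m$ when $m\leq 2k+1$, and absent otherwise — have an $R$-successor inside $p_{m'}\cap c\cap L_k$. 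Climbing the zigzag $z^k_0,z^k_1,z^k_2,\dots$, each demanded successor is present and carries the correct $q$-label, right up to the top point $z^k_{2k+1}$: it lies in $p_{2k+1}$, has no successor but itself, and lies in $q$ rather than $-q$; since moreover $p_{2k+2}\cap L_k=\emptyset$, the condition $\psi_k$ fails at $z^k_0$. Every condition of higher index holds at $z^k_0$ vacuously, its premise $p_m$ with $m\geq 2k+2$ missing $L_k$ entirely. Hence any finite $\Sigma'\subseteq\Sigma$ omits $\psi_k$ for some $k$, so $z^k_0\in\bigcap\Sigma'$ and $\bigcap\Sigma'\neq\emptyset$.

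I expect this chain computation to be the only genuine obstacle. Two points need care: that $\C$ and $\I$ in $A_S$ act on each chain independently — which is precisely why there must be no $R$-edges between distinct $L_k$ — so that membership in the conditions of $\Sigma$ can be checked chain by chain; and the bookkeeping that pins down which condition runs off the end of $L_k$, namely $\psi_k$, where it is essential that the final point of the chain has the wrong $q$-label for the $-q$-requirement, so that no successor can repair it. Everything else (that a subalgebra of a tangled closure algebra is again one, that the proper filter generated by $\Sigma$ extends to an ultrafilter, and that $0\notin x_0$) is routine.
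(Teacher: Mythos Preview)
Your argument is correct. The chain-by-chain verification goes through exactly as you describe: for each $k$ the bottom point $z^k_0$ witnesses every condition in $\Sigma$ with index $m\ne 2k+1$, so any finite $\Sigma'\sub\Sigma$ is realised by some $z^k_0$, and the endless-path description of $\C^t_R$ immediately gives $\C^t\{q,-q\}=\emptyset$ since each chain is finite. (One minor point: the fact that the top point $z^k_{2k+1}$ lies in $q$ is not what makes $\psi_k$ fail there; the failure is entirely because $p_{2k+2}\cap L_k=\emptyset$, which you do note.)

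Your route differs from the paper's in an interesting way. The paper takes $A_0$ to be the \emph{free} tangled closure algebra on $\{p_n:n<\omega\}\cup\{q\}$ and establishes the finite meet property of $\Sigma\cup\{-\C^t\{q,-q\}\}$ by, for each $m$, mapping down via the universal property to the single finite chain $S_m=\{0,\dots,m\}$ and checking that the images are jointly satisfiable there. You instead assemble all these finite chains into one infinite quasi-ordered set $S=\bigcup_k L_k$ and let $A_0$ be the subalgebra of its powerset algebra generated by the desired elements; this lets you verify everything concretely inside one model, without invoking freeness or homomorphisms, and yields the slightly stronger conclusion $\C^t\{q,-q\}=0$ (not merely $\notin x_0$). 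The combinatorial core---chains of increasing length along which the zigzag eventually runs out---is the same in both arguments; your packaging is more self-contained, while the paper's leans on the Lindenbaum--Tarski machinery already developed in Section~\ref{hsf}.
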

\begin{proof}
Let $(A_0,\C^t)$  be the free tangled closure algebra generated by a set $\{p_n:n<\omega\}\cup\{q\}$ of distinct elements.
This exists as explained in Section \ref{hsf}.
Let $a_{n}$ be $\I(p_{n}\Rightarrow \C(p_{n+1}\land q))$ if $n$ is even,  and 
 $\I(p_{n}\Rightarrow \C(p_{n+1}\land  -q))$ if $n$ is odd, where $\C$ is the closure algebra reduct of $\C^t$ and $\I$ is the interior operation dual to $\C$. Then $\Sigma=\{p_0\}\cup\{a_n: n<\omega\}\sub A_0$.
 
It suffices to show that the set $\Sigma\cup\{-\C^t\{q,-q\}\}$ has the finite meet property in $A_0$: every finite subset has non-zero meet. For then $\Sigma\cup\{-\C^t\{q,-q\}\}$ is included in an ultrafilter $x_0$ of $A_0$ which includes $\Sigma$ but does not contain $\C^t\{q,-q\}$ as it contains $-\C^t\{q,-q\}$.

For each positive integer $m$, let $\Sigma_m=\{p_0\}\cup\{a_n: n<m\}$.  Any finite subset of $\Sigma\cup\{-\C^t\{q,-q\}\}$ is a subset of $\Sigma_m\cup\{-\C^t\{q,-q\}\}$ for some $m$, so  it suffices now to show that 
$\meet(\Sigma_m\cup\{-\C^t\{q,-q\}\})\ne 0$ for any $m$.

Define a quasi-ordered set $(S_m,R_m)$ by $S_m=\{0,\dots,m\}$ and $xR_my$ iff $x\leq y$. Put $p_n'=\{n\}$ for all 
$n\leq m$ and let $q'=\{n\leq m: m\text{ is odd}\}$. Then by the freeness property there exists a tangled closure algebra homomorphism  $f$ from  $(A_0,\C^t)$ to the powerset algebra $(A_{S_m},\C^t_{R_m})$ of $(S_m,R_m)$ such that  $f(p_n)=p'_n$ for all $n\leq m$ and $f(q)=q'$.

For $n<m$, let $a_n'=f(a_n)$. Since $f$ preserves the closure algebra operations, $a_n'$ is the subset of $S_m$ specified  by replacing $p_k$  by $p_k'$ and $q$ by $q'$ in $a_n$. Let 
$\Sigma'_m=\{p'_0\}\cup\{a'_n: n<m\}$.
It is evident that $0\in \bigcap(\Sigma'_m\cup\{-\C^t_{R_m}\{q',-q'\}\})$, since 0 belongs to $p_0'$ and to each $a_n'$, and any endless $R_m$-path is ultimately constant, so cannot move in and out of  $q'$ endlessly, hence 
$0\notin \C^t_{R_m}\{q',-q'\}$.

But if we had $\meet(\Sigma_m\cup\{-\C^t\{q,-q\}\})= 0$ in $A_0$, then as $f$ preserves all the operations involved, we would have
$\bigcap(\Sigma'_m\cup\{-\C^t_{R_m}\{q',-q'\}\})= \emptyset$, a contradiction.

\end{proof}

Now taking the algebra $(A_0,\C^t)$  given by this Lemma,
let $U_0$ be the set of ultrafilters of $A_0$. 
Define a relation $R$ on $U_0$ by putting $xRy$ iff $\{a:\I a\in x\}\sub y$, or equivalently iff $\{\C a:a\in y\}\sub x$. Then it is standard theory that $R$ is a quasi-order  on $U_0$, and has, for all $a\in A_0$ and $x\in U_0$,
\begin{eqnarray}
\label{Ican}
\I a\in x &&\text{iff\quad  for all } y\in U_0, \ xRy \text{ implies }a\in y.
\\
\C a\in x &&\text{iff\quad for some } y\in U_0, \ xRy \text{ and }a \in y.  \label{Ccan}
\end{eqnarray}
Let 
$
U=\{y\in U_0:x_0Ry\},
$
where $x_0$ is the  ultrafilter given by the Lemma. For $a\in A_0$, put 
$$
\ab{a}=\{x\in U:a\in x\}.
$$
Then $A=\{\ab{a}:a\in A_0\}$ is a Boolean subalgebra of the powerset algebra of $U$, since $U\setminus\ab{a}=\ab{{-a}}$ and $\ab{a}\cap\ab{b}=\ab{a\land b}$. The map $a\mapsto \ab{a}$ is a Boolean algebra homomorphism from $A_0$ onto $A$.

We now transfer the tangled closure operation $\C^t$ on $A_0$ to one on $A$, by defining 
\begin{equation} \label{defCAt}
\C_{A}^t\{\ab{\g}:\g\in\G\}=\ab{\C^t\G}
\end{equation}
for all $\G\in\c P_{fin}(A_0)$. We need to check that this is well-defined, i.e.\ that if $\ab{\g}=\ab{\g'}$ for all $\g\in\G$, and
$\G'=\{\g':\g\in\G\}$, then $\ab{\C_A^t\G}=\ab{\C_A^t\G'}$
But  we have  $\ab{\g}=\ab{\g'}$ iff $\g$ and $\g'$ belong to the same members of $U$, which is equivalent to requiring that 
$\g\liff\g'$ belongs to every member of $U$. By   \eqref{Ican} with $x=x_0$, this is equivalent to having $\I(\g\liff\g')\in x_0$. Hence the well-definedness follows because $(A_0,\C^t)$ satisfies
$$
\meet_{\g\in\G}\I(\g\liff\g')\leq\I\big(\C^t\G\liff \C^t\G'\big)
$$
by Theorem \ref{thm:cong} (2), so if $\I(\g\liff\g')\in x_0$ for all $\g\in\G$, then $\I(\C^t\G\liff \C^t\G')\in x_0$.

The unary operation $\C_A$ induced by $\C^t_A$ is given by 
$$
\C_{A}\ab{a}=\C_{A}^t\{\ab{a}\}=\ab{\C^t\{a\}}=\ab{\C a}, 
$$
and its dual has
$\I_{A}\ab{a}=\ab{\I a}$. Equation \eqref{defCAt} ensures that $a\mapsto \ab{a}$ is a homomorphism from $(A_0,\C^t,\C)$  onto $(A,\C^t_A,\C_A)$. Hence $(A,\C^t_A,\C_A)$ is a closure algebra satisfying Fix and Ind, so is a tangled closure algebra.

\begin{theorem}
If $(B,\C_B^t)$ is any tangled closure algebra for which $B$ is complete, then
there is no tangled closure embedding of $(A,\C_A^t)$ into  $(B,\C_B^t)$.
\end{theorem}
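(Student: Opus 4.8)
The plan is to suppose for contradiction that an embedding $e:(A,\C_A^t)\to(B,\C_B^t)$ exists with $B$ complete, and to track the relationship between $\C_B^t$ and the greatest-fixed-point formula \eqref{defCt}, which is forced on a complete algebra by Theorem~\ref{thm:compcla} (and Lemma~\ref{lem:defCt} in general). The key objects are the images $e\ab{p_n}$ and $e\ab{q}$; the essential feature of $A$ that I want to exploit is that in $A$ the element $\ab{\C^t\{q,-q\}}$ is \emph{small} in a suitable sense (it is $\notin x_0$, i.e.\ $\ab{\C^t\{q,-q\}}$ does not contain the point of $U$ corresponding to $x_0$), whereas the whole family $\Sigma$ of interior-elements is ``true at $x_0$''. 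So in $A$ we have $\ab{p_0}\le$ each $\ab{\I(p_{2n}\imp\C(p_{2n+1}\land q))}$, etc., along the chain, yet $\ab{p_0}\not\le\ab{\C^t\{q,-q\}}$ is \emph{consistent} with those constraints. The point is that this configuration is impossible once $\C^t\{q,-q\}$ is computed as an honest greatest fixed point in a complete algebra.

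First I would set up notation in $B$: write $P_n=e\ab{p_n}$, $Q=e\ab q$, and work with the closure/interior operators $\C_B,\I_B$. Since $e$ is a tangled closure embedding it commutes with $\C^t$, hence with $\C$ and $\I$, and it is injective and join/meet preserving on the image (though not necessarily regular into all of $B$ — but I only need the image). From $\Sigma\sub x_0$ and the definition of $\ab\cdot$ I get, in $A$, that $\ab{p_0}\neq 0$ and $\ab{p_0}\le \I_A\big(\ab{p_{2n}}\imp \C_A(\ab{p_{2n+1}}\land\ab q)\big)$ for all $n$, and similarly for the odd steps with $-\ab q$; applying $e$ these inequalities transfer verbatim into $B$, with $P_0\neq 0$. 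Now the crucial move: form $b=\join_B\{\,a\in B : a\le \C_B(Q\land a)\ \text{and}\ a\le\C_B(-Q\land a)\,\}$, i.e.\ $b=\C_B^t\{Q,-Q\}$ by the greatest-fixed-point characterisation valid in the complete algebra $B$ (this is where completeness of $B$ is used, via Lemma~\ref{lem:defCt}/Theorem~\ref{thm:compcla}). The plan is to show $P_0\le b$, contradicting $P_0\neq 0$ together with the fact that $e\ab{\C^t\{q,-q\}}=b\cap(\text{image})$ forces $P_0\le e\ab{\C^t\{q,-q\}}$, which by injectivity of $e$ gives $\ab{p_0}\le\ab{\C^t\{q,-q\}}$, i.e.\ $\C^t\{q,-q\}\in x_0$ — contradicting the Lemma.

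To show $P_0\le b$ I would produce an explicit post-fixed point of $f_{\{Q,-Q\}}$ lying above $P_0$. Let $c=\join_{n<\omega}P_n$ (exists since $B$ is complete). Using the chain inequalities $P_{2n}\le\I_B(P_{2n}\imp\C_B(P_{2n+1}\land Q))$ — equivalently $P_{2n}\le\C_B(P_{2n+1}\land Q)\le\C_B(P_{2n+1}\land Q\land c)$, and similarly $P_{2n+1}\le\C_B(P_{2n+2}\land(-Q)\land c)$ — and monotonicity/finite additivity of $\C_B$, I would verify $c\le\C_B(Q\land c)$ and $c\le\C_B((-Q)\land c)$: for $c\le\C_B(Q\land c)$, split $c=\join P_{2n}\lor\join P_{2n+1}$; every $P_{2n}$ is below $\C_B(Q\land c)$ directly, and every odd-indexed $P_{2n+1}$ is below $\C_B((-Q)\land c)\le\C_B c$ and also $P_{2n+1}$ feeds the next even step, so one threads the inequality through the ``two steps ahead'' structure — the point being that from any $P_n$ the path can be continued to re-enter $Q$ (and $-Q$) arbitrarily far along, so $P_n\le\C_B(Q\land c)$ for \emph{all} $n$, odd or even, and dually. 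Hence $c$ is a post-fixed point for $\{Q,-Q\}$, so $c\le b$, and in particular $P_0\le c\le b=\C_B^t\{Q,-Q\}=e(\C_A^t\{\ab q,-\ab q\})=e\ab{\C^t\{q,-q\}}$.

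The main obstacle I anticipate is precisely this threading argument: one must be careful that the interior wrapper $\I_B$ does not obstruct iterating the inequality, and that $\C_B(P_{2n+1}\land(-Q)\land c)$ can be pushed back to $\C_B(P_{2n+1}\land(-Q)\land\C_B(\cdots))$ and absorbed — this is routine given \eqref{ICprop} and idempotence of $\C_B$, but it is the one place where a real (if short) calculation with the closure-algebra identities is needed, and where the alternating parity of $Q,-Q$ along the chain must be used essentially rather than incidentally. A secondary point to handle cleanly is the deduction $P_0\le b\Rightarrow P_0\le e\ab{\C^t\{q,-q\}}$: since $e$ commutes with $\C^t$ we have $e\ab{\C^t\{q,-q\}}=\C_B^t\{e\ab q,e(-\ab q)\}=\C_B^t\{Q,-Q\}=b$ outright, so in fact $P_0\le b$ \emph{is} $P_0\le e\ab{\C^t\{q,-q\}}$, and injectivity of $e$ plus $P_0=e\ab{p_0}\neq e(0)$ closes the loop against $\C^t\{q,-q\}\notin x_0$.
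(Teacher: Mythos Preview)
Your proposal is correct and follows essentially the same line as the paper's proof: form $c=\join_{n<\omega}P_n$ in the complete algebra $B$, use the chain inequalities (which in fact reduce to $P_{2n}\le\C_B(P_{2n+1}\land Q)$ and $P_{2n+1}\le\C_B(P_{2n+2}\land(-Q))$ because the interior-wrapped elements from $\Sigma$ become the top element of $A$, so the $\I_B$ wrapper you worry about simply vanishes) to show $c$ is a post-fixed point for $\{Q,-Q\}$, hence $0\ne P_0\le c\le\C_B^t\{Q,-Q\}$. The only cosmetic difference is in how the contradiction is closed: the paper observes directly that $\ab{\C^t\{q,-q\}}=\emptyset$ (using closedness of $\C^t\{q,-q\}$), so $e\ab{\C^t\{q,-q\}}=0\ne\C_B^t\{Q,-Q\}$ and $e$ fails to preserve $\C^t$; you instead assume $e$ does preserve $\C^t$, pull back $P_0\le e\ab{\C^t\{q,-q\}}$ via injectivity to get $\C^t\{q,-q\}\in x_0$, and contradict the lemma --- these are equivalent endings.
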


\begin{proof}
Assume for the sake of contradiction that there exists  a $f:(A,\C_A^t) \to (B,\C_B^t)$ that is a tangled closure embedding. Then  we show that  
 $f\C_A^t\{\ab{q},\ab{{-q}}\}\}\ne\C_B^t\{f\ab{q},f\ab{{-q}}\}$, which contradicts the assumption that $f$ preserves tangled closure.
 
 By Theorem \ref{thm:cong} (1), $\C\C^t\{q,-q\}=\C^t\{q,-q\}\notin x_0$, so by \eqref{Ccan}, $\C^t\{q,-q\}\notin y$ for all $y\in U$, hence $\ab{\C^t\{q,-q\}}=\emptyset$.
 Thus $f\C_A^t\{\ab{q},\ab{{-q}}\}=f\ab{\C^t\{q,-q\}}=f\emptyset=0$ in $B$. Therefore to prove that $f$ is not a tangled closure homomorphism it suffices to show that  $\C_B^t\{f\ab{q},f\ab{{-q}}\}\ne 0$.
 
To show this, put $b_n=f\ab{p_n}$  for each $n<\omega$, and let $b=\join\{b_n:n<\omega\}$. Then $b$ exists in $B$ as $B$ is complete. We prove that $b$ is a post-fixed point for $\{f\ab{q},f\ab{{-q}}\}$, i.e.\
\begin{equation} \label{fixpointb}
b\leq   \C_B(b\land f\ab{{q}})\land \C_B(b\land f\ab{{-q}}).
\end{equation}

Now  if $n$ is even, then  since $a_n\in x_0$ it follows  by \eqref{Ican} that
$p_{n}\Rightarrow \C(p_{n+1}\land q)\in y$ for all $y\in U$, hence
$
\ab{p_n}\sub\ab{\C(p_{n+1}\land q)}=\C_A(\ab{p_{n+1}}\cap \ab{q}).
$
Similarly, if $n$ is odd, then 
$
\ab{p_n}\sub\C_A(\ab{p_{n+1}}\cap \ab{{-q}}).
$
Since $f$ is a closure algebra homomorphism, this implies that for all $n<\omega$,
\begin{align}
&b_n\leq\C_B(b_{n+1}\land f\ab{q}), \quad \text{ if $n$ is even;}   \label{even}
\\
&b_n\leq\C_B(b_{n+1}\land f\ab{{-q}}), \quad \text{ if $n$ is odd}.  \label{odd}
\end{align}
Thus if $n$ is even, then by \eqref{even} $b_n\leq\C_B(b_{n+1}\land f\ab{q}))\leq\C_B(b\land f\ab{q}))$. Also then as $n+1$ is odd we use \eqref{odd} with $n+1$ in place of $n$ to infer that 
$$
\C_B(b_{n+1}) \leq \C_B\C_B(b_{n+2}\land f\ab{{-q}})\leq \C_B(b\land f\ab{{-q}}).
$$
Since $b_n\leq\C_B(b_{n+1})$ follows from \eqref{even}, altogether these facts  imply that
\begin{equation} \label{fixpointbn}
b_n\leq   \C_B(b\land f\ab{{q}})\land \C_B(b\land f\ab{{-q}})
\end{equation}
when $n$ is even. But a similar proof shows that \eqref{fixpointbn} also holds when $n$ is odd. Hence it holds for all $n<\omega$, from which \eqref{fixpointb} follows.

Thus $b$ is indeed a post-fixed point for $\{f\ab{q},f\ab{{-q}}\}$, so $b\leq \C_B^t\{f\ab{q},f\ab{{-q}}\} $ by Lemma \ref{lem:defCt}. But as $p_0\in x_0$ we have $x_0\in\ab{p_0}$, hence $\ab{p_0}\ne \emptyset$. 
So as $f$ is injective,
$$
0=f\emptyset\ne f\ab{p_0}=b_0\leq b\leq \C_B^t\{f\ab{q},f\ab{{-q}}\}.
$$
This proves $ \C_B^t\{f\ab{q},f\ab{{-q}}\}\ne 0$, which completes the proof as explained.
\end{proof}

Thus $(A,\C_A^t)$ has no homomorphic embedding into any complete tangled closure algebra. In particular it is not embeddable into the algebra $(A_S,\C^t_S)$ of subsets of any topological space $S$, including not being embeddable into 
the algebra $(A_S,\C^t_R)$ of subsets of any quasi-ordered set  $(S,R)$.

Let $B$ be any complete extension of the Boolean algebra $A$; take $\C_B$ to be the closure operator on $B$ extending $\C_A$ defined by \eqref{complC}; and let $\C^t_B$ be the expansion of $\C_B$ given by \eqref{defCt}.
Then  $ (B,\C_B^t)$ is a tangled closure algebra by Theorem \ref{thm:compcla}.
The inclusion $A\hookrightarrow B$ provides the promised example of a map 
$(A,\C_A^t)\to (B,\C_B^t)$ that is a homomorphism of the associated closure algebra reducts but is not a tangled closure homomorphism (by Theorem \ref{findom}, such an example must have infinite $A$). It also provides the promised example in which $(A,\C_A)$ is a subalgebra of $(B,\C_B)$ while $(A,\C_A^t)$ is not a subalgebra of $(B,\C_B^t)$. 

Finally we note that this absence of complete extensions is not attributable to the fact that tangled closure algebras can be seen as having infinite signature (Remark \ref{infsig}). The construction needs just
the element $\C_A^t\{\ab{q},\ab{{-q}}\}$ which requires only  the binary operation $(a,b)\mapsto\C_A^t\{a,b\}$ of this signature for its formation.

\bibliographystyle{plain}

\bibliographystyle{plain}

\begin{thebibliography}{10}

\bibitem{bana:hull56}
B.~Banaschewski.
\newblock H{\"u}llensysteme und {E}rweiterung von {Q}uasi-{O}rdnungen.
\newblock {\em Zeitschrift f{\"u}r Mathematische Logik und Grundlagen der
  Mathematik}, 2:117--130, 1956.

\bibitem{bana:cate67}
B.~Banaschewski and G.~Bruns.
\newblock Categorical characterization of the {M}ac{N}eille completion.
\newblock {\em Archiv der Mathematik}, 18:369--377, 1967.

\bibitem{dave:intr90}
B.~A. Davey and H.~A. Priestley.
\newblock {\em Introduction to Lattices and Order}.
\newblock Cambridge University Press, 1990.

\bibitem{dawa:moda09}
Anuj Dawar and Martin Otto.
\newblock Modal characterisation theorems over special classes of frames.
\newblock {\em Annals of Pure and Applied Logic}, 161:1--42, 2009.

\bibitem{dumm:moda59}
M.~A.~E. Dummett and E.~J. Lemmon.
\newblock Modal logics between {S}4 and {S}5.
\newblock {\em Zeitschrift f{\"u}r Mathematische Logik und Grundlagen der
  Mathematik}, 5:250--264, 1959.

\bibitem{fern:tang11}
David Fern\'{a}ndez-Duque.
\newblock Tangled modal logic for spatial reasoning.
\newblock In Toby Walsh, editor, {\em Proceedings of the Twenty-Second
  International Joint Conference on Artificial Intelligence (IJCAI)}, pages
  857--862. AAAI Press/IJCAI, 2011.

\bibitem{fern:tang12}
David Fern{\'{a}}ndez-Duque.
\newblock Tangled modal logic for topological dynamics.
\newblock {\em Annals of Pure and Applied Logic}, 163:467--481, 2012.

\bibitem{giva:intr09}
Steven Givant and Paul Halmos.
\newblock {\em Introduction to {B}oolean Algebras}.
\newblock Springer, 2009.

\bibitem{gold:fini16}
Robert Goldblatt and Ian Hodkinson.
\newblock The finite model property for logics with the tangle modality.
\newblock Submitted.

\bibitem{gold:spat16}
Robert Goldblatt and Ian Hodkinson.
\newblock Spatial logic of tangled closure operators and modal mu-calculus.
\newblock Submitted.

\bibitem{gold:spat14}
Robert Goldblatt and Ian Hodkinson.
\newblock Spatial logic of modal mu-calculus and tangled closure operators.
\newblock \url{arxiv.org/abs/1603.01766}, 2016.

\bibitem{gold:tang16}
Robert Goldblatt and Ian Hodkinson.
\newblock The tangled derivative logic of the real line and zero-dimensional
  spaces.
\newblock In Lev Beklemishev, St\'{e}phane Demri, and Andr\'{a}s M\'{a}t\'{e},
  editors, {\em Advances in Modal Logic, Volume 11}, pages 342--361. College
  Publications, 2016.

\bibitem{john:elem01}
Peter Johnstone.
\newblock Elements of the history of locale theory.
\newblock In C.~E. Aull and R.~Lowen, editors, {\em Handbook of the History of
  General Topology}, volume~3, pages 835--851. Kluwer Academic Publishers,
  2001.

\bibitem{jons:bool51}
Bjarni J{\'o}nsson and Alfred Tarski.
\newblock {B}oolean algebras with operators, part {I}.
\newblock {\em American Journal of Mathematics}, 73:891--939, 1951.

\bibitem{kura:oper22}
Casimir Kuratowski.
\newblock Sur l'op{\'e}ration $\bar{A}$ de l'{A}nalysis {S}itus.
\newblock {\em Fundamenta Mathematicae}, 3:182--199, 1922.

\bibitem{macn:part37}
H.~M. MacNeille.
\newblock Partially ordered sets.
\newblock {\em Transactions of the American Mathematical Society}, 42:416--460,
  1937.

\bibitem{mcki:alge44}
J.~C.~C. McKinsey and Alfred Tarski.
\newblock The algebra of topology.
\newblock {\em Annals of Mathematics}, 45:141--191, 1944.

\bibitem{mcki:clos46}
J.~C.~C. McKinsey and Alfred Tarski.
\newblock On closed elements in closure algebras.
\newblock {\em Annals of Mathematics}, 47:122--162, 1946.

\bibitem{monk:comp70}
J.~D. Monk.
\newblock Completions of {B}oolean algebras with operators.
\newblock {\em Mathematische Nachrichten}, 46:47--55, 1970.

\bibitem{rasi:alge51}
H.~Rasiowa.
\newblock Algebraic treatment of the functional calculi of {H}eyting and
  {L}ewis.
\newblock {\em Fundamenta Mathematicae}, 38:99--126, 1951.

\bibitem{rasi:math63}
Helena Rasiowa and Roman Sikorski.
\newblock {\em The Mathematics of Metamathematics}.
\newblock PWN--Polish Scientific Publishers, Warsaw, 1963.

\bibitem{siko:bool64}
R.~Sikorski.
\newblock {\em Boolean Algebras}.
\newblock Springer-Verlag, Berlin, second edition, 1964.

\bibitem{tars:auss38}
Alfred Tarski.
\newblock Der {A}ussagenkalk{\"u}l und die {T}opologie.
\newblock {\em Fundamenta Mathematicae}, 31:103--134, 1938.
\newblock English translation by J. H. Woodger as \textit{Sentential Calculus
  and Topology} in \cite{tars:logi56}, 421--454.

\bibitem{tars:logi56}
Alfred Tarski.
\newblock {\em Logic, Semantics, Metamathematics: Papers from 1923 to 1938}.
\newblock Oxford University Press, 1956.
\newblock Translated into English and edited by J. H. Woodger.

\bibitem{theu:macn07}
Mark Theunissen and Yde Venema.
\newblock Mac{N}eille completions of lattice expansions.
\newblock {\em Algebra Universalis}, 57:143--193, 2007.

\bibitem{bent:moda76corr}
J.~F. A.~K. van Benthem.
\newblock {\em Modal Correspondence Theory}.
\newblock PhD thesis, University of Amsterdam, 1976.

\bibitem{bent:moda83}
J.~F. A.~K. van Benthem.
\newblock {\em Modal Logic and Classical Logic}.
\newblock Bibliopolis, Naples, 1983.

\end{thebibliography}

\end{document}